\newtheorem{theorem}[subsection]{Theorem}
\newtheorem{lemma}[subsection]{Lemma}
\newtheorem{corollary}[subsection]{Corollary}
\theoremstyle{definition}
\newtheorem{definition}[subsection]{Definition}
\theoremstyle{remark}
\newtheorem{remark}[subsection]{Remark}
\def\R{\mathbb{R}}
\title{{\bf Newton flows for elliptic functions IV}\\
{\small {\bf Pseudo Newton graphs: bifurcation \& creation of flows}}}
\author{G.F. Helminck,\\
Korteweg-de Vries Institute\\
University of Amsterdam\\
P.O. Box 94248\\
1090 GE Amsterdam\\
The Netherlands\\
e-mail: g.f.helminck@uva.nl\\ 
F. Twilt,\\
Department of Applied Mathematics\\
University of Twente\\
P.O. Box 217, 7500 AE Enschede\\
The Netherlands\\
e-mail: f.twilt@kpnmail.nl\\
}
\begin{document}
\maketitle

\begin{abstract}
\noindent
An elliptic Newton flow is a dynamical system that can be interpreted as a continuous version of Newton's iteration method for finding the zeros of an elliptic function $f$. Previous work focusses on structurally stable  flows (i.e., the phase portraits are topologically invariant under perturbations of the poles and zeros for $f$), including a classification / representation result for such flows in terms of {\it Newton graphs}  (i.e., cellularly embedded toroidal graphs fulfilling certain combinatorial properties).
The present paper deals with {\it non-structurally stable elliptic Newton flows} determined by pseudo Newton graphs (i.e., cellularly embedded toroidal graphs, either generated  by a Newton graph, or the so called nuclear Newton graph, exhibiting only one vertex and two edges).  Our study results into a deeper insight in the creation of structurally stable Newton flows and the bifurcation of non-structurally stable Newton flows.
\end{abstract}

{\bf Subject classification:} 
05C75, 
33E05, 34D30, 
37C70, 49M15.\\

{\bf Keywords:}  
Dynamical system, desingularized elliptic Newton flow, structural stability, elliptic function, phase portrait, Newton graph (elliptic-, nuclear-, pseudo-), cellularly embedded toroidal (distinguished) graph, face traversal procedure,
Angle property, Euler property, Hall condition.

\section{Motivation; recapitulation of earlier results}

In order to clarify the context of the present paper, we recapitulate some earlier results.\\

\noindent
{\large{\bf {\small 1.1 Elliptic Newton flows; structural stability}}}\\

The results in the following four subsections, can all be found in our paper  \cite{HT1}.\\

\noindent
{\large{\bf {\small 1.1.1 Planar and toroidal elliptic Newton flows}}}\\

Let $f$  be an elliptic (i.e., meromorphic, doubly periodic) function of order $r(\geqslant 2)$ on the complex plane $\mathbb{C}$ with $(\omega_{1},\; \omega_{2})$, ${\rm Im}\frac{\omega_{2}}{\omega_{1}} >0,$ as {\it basic periods} spanning a lattice $\Lambda(=\Lambda_{\omega_{1},\; \omega_{2}})$.

The {\it planar elliptic Newton flow} $\overline{\mathcal{N} }(f)$ is a 
 $C^{1}$-vector field on $\mathbb{C}$, defined as a {\it desingularized version}\footnote{\label{FTN1}
 In fact, we consider the system $  \dfrac{dz}{dt} =-(1+|f(z)|^{4})^{-1}|f'(z)|^{2}\dfrac{f (z)}{f^{'} (z)}$: a continuous version of Newton's damped iteration method for finding zeros for $f$.
} of the planar dynamical system, $\mathcal{N}(f)$, given by: 
\begin{equation}
\label{vgl2x}
  \dfrac{dz}{dt} = \dfrac{-f (z)}{f^{'} (z)}, z \in \mathbb{C}.
\end{equation}

\noindent
On a  non-singular, oriented $\overline{\mathcal{N} }(f)$-trajectory $z(t)$ we have: \\

\noindent
- arg $(f)=$constant, and $|f(z(t))|$ is a strictly decreasing function on $t$.\\
So that the $\overline{\mathcal{N} }(f)$-{\it equilibria} are of the form:
\ \\
\noindent
- a stable star node (attractor); in the case of a zero for $f$, or \\
- an unstable star node (repellor); in the case of a pole for $f$, or\\
- a saddle; in the case of a critical point for $f$ (i.e., $f'$ vanishes, but $f$ not).\\

For an (un)stable node the (outgoing) incoming trajectories intersect under a non-vanishing angle $\frac{\Delta}{k}$, where $\Delta$ stands for the difference of the arg$f$-values on these trajectories, and $k$ for the multiplicity of the corresponding (pole) zero. 
The saddle in the case of a {\it simple} critical point (i.e., $f '$ does not vanish) is {\it orthogonal} and the two unstable (stable) separatrices constitute the ``local'' {\it unstable (stable) manifold} at this saddle.\\

Functions such as $f$ correspond to meromorphic functions on the torus $T(\Lambda)(=\mathbb{C}/\Lambda_{\omega_{1},\; \omega_{2}})$.
So, we can interprete $\overline{\mathcal{N} }(f)$ as a global
 $C^{1}$-vector field, denoted\footnote{\label{FTN2} Occasionally, we will refer to $\overline{\overline{\mathcal{N}}} (f)$ as to a toroidal (elliptic) Newton flow.} $\overline{\overline{\mathcal{N}}} (f)$, on the Riemann surface $T(\Lambda)$ and it is allowed to apply results for  $C^{1}$-vector fields on compact differential manifolds, such as certain theorems of Poincar\'e-Bendixon-Schwartz (on limiting sets) and those of Baggis-Peixoto (on  $C^{1}$-structural stability). \\
 It is well-known that the function $f$ has precisely $r$ zeros and $r$ poles  (counted by multiplicity) on the half open / half closed period parallelogram $P_{\omega_{1}, \; \omega_{2}}$ given by $$\{t_{1}\omega_{1} + t_{2}\omega_{2} \mid 0 \leqslant t_{1}<1, \;0 \leqslant t_{2}<1\}.$$

\noindent
Denoting
these zeros and  poles 
by $a_{1}, \! \cdots \!\!,a_{r} $, resp. $b_{1}, \! \cdots \! ,b_{r} $,  
we have\footnote{\label{FTN3}In fact $\lambda_{0}= -\eta(f(\gamma_{2}))\omega_{1}+ \eta(f(\gamma_{1}))\omega_{2},$ where $\eta(f(\cdot))$ stands for winding numbers of the curves $f(\gamma_{1})$ and $f(\gamma_{2})$ with $\gamma_{i}$ the sides of $P_{\omega_{1}, \; \omega_{2}}$  spanned by $\omega_{i} , i=1, 2,$ (cf. \cite{Peix1}).}: (cf. \cite{M2})
\begin{equation}
\label{vgl9}
a_{i} \neq b_{j},i, j=1, \! \cdots \! \!,r \text{ and }a_{1}+  \cdots +a_{r} =b_{1} + \cdots  +b_{r} +\lambda_{0}, \lambda_{0} \in \Lambda,
\end{equation}
and thus 
\begin{equation}
\label{vgl10}
[a_{i}] \neq [b_{j}],i, j=1, \! \cdots \! ,r \text{ and }[a_{1}]+ \! \cdots \! +[a_{r}] =[b_{1}] +\! \cdots \! +[b_{r}].
\end{equation}
where $[a_{1}], \! \cdots \!, [a_{r}] $ and  $[b_{1}], \! \cdots \!, [b_{r}] $ are the zeros, resp. poles for $f$ on $T(\Lambda)$ and $[\cdot]$ stands for the congruency class $\mathrm {mod} \, \Lambda$
of a number in $\mathbb{C}$. Conversely, any pair $(\{a_{1}, \! \cdots \!\!,a_{r} \}, \{b_{1}, \! \cdots \! ,b_{r}\}) $
that fulfils (\ref{vgl9}) determines (up to a multiplicative constant) an elliptic function with $\{a_{1}, \! \cdots \!\!,a_{r} \}$ and $\{b_{1}, \! \cdots \! ,b_{r}\}$ as zeros resp. poles in $P(=P_{\omega_{1}, \; \omega_{2}})$.\\

\noindent
{\large{\bf {\small 1.1.2 The topology $\tau_{0}$}}}\\

It is not difficult to see that the functions $f$, and also the corresponding toroidal Newton flows, can be represented by the set of all ordered pairs 
$
 (\{ [a_{1}],\! \cdots \! ,[a_{r} ]\}, \, \{ [b_{1}], \! \cdots \! ,[b_{r} ]\})
$
of congruency classes $\mathrm {mod} \, \Lambda$ (with $a_{i}, b_{i} \in P, i=1, \dots, r,$) that fulfil 
(\ref{vgl10}).

This representation space can be endowed with a topology, say $\tau_{0}$, induced by the Euclidean topology on $\mathbb{C}$, that is natural in the following sense: 
Given an elliptic function $f$ of order $r$ and $\varepsilon >0$ sufficiently small, a $\tau_{0}$-neighborhood $\mathcal{O}$ of $f$ exists such that for any $g$ in $\mathcal{O}$, the zeros (poles) for $g$ are contained in $\varepsilon$-neighborhoods of the zeros (poles) for $f$.

$E_{r}(\Lambda)$  is the set of all functions $f$ of order $r$ on $T(\Lambda)$ and $N_{r}(\Lambda)$ the set of corresponding flows  $\overline{\overline{\mathcal{N}} }(f)$. By $X(T)$ we mean the set of all $C^{1}$-vector fields on $T$, endowed with the $C^{1}$-topology (cf. \cite{Hir}). 

The topology $\tau_{0}$ on $E_{r}(\Lambda$
and the $C^{1}$-topology  on $X(T)$ are matched by: \\

\noindent
The map $E_{r}(\Lambda) \rightarrow X(T): f \mapsto \overline{\overline{\mathcal{N}}} (f)$
is $\tau_{0}\!-\!C^{1}$ continuous.\\

\noindent
{\large{\bf {\small 1.1.3 Canonical forms of elliptic Newton flows}}}\\

\noindent
The flows $\overline{\overline{\mathcal{N}} }(f)$ and $\overline{\overline{\mathcal{N}} }(g)$) in $N_{r}(\Lambda)$ are called {\it conjugate}, denoted $\overline{\overline{\mathcal{N}}} (f) \sim \overline{\overline{\mathcal{N}}} (g)$, if there is a homeomorphism from $T$ onto itself mapping maximal trajectories of $\overline{\overline{\mathcal{N}} }(f)$ onto those of $\overline{\overline{\mathcal{N}} }(g)$, thereby respecting the orientations of these trajectories.\\

\noindent
For a given $f$ in $E_{r}(\Lambda)$, let the lattice $\Lambda^{*}$  be arbitrary. Then there is a function $f^{*}$ in $E_{r}(\Lambda^{*})$ such that:
$$
\overline{\overline{\mathcal{N}}} (f) \sim \overline{\overline{\mathcal{N}}} (f^{*}).
$$
In fact, the linear isomorphism $\mathbb{C} \to \mathbb{C}: (\omega_{1},\omega_{2}) \mapsto (\omega_{1}^{*},\omega_{2}^{*})$; ${\rm Im}\frac{\omega_{2}^{*}}{\omega_{1}^{*}} >0,$ transforms $\Lambda$ into $\Lambda^{*}$ and the pair $(\{a_{1}, \! \cdots \!\!,a_{r} \}, \{b_{1}, \! \cdots \! ,b_{r}\}) $, determining $f$, into $(\{a_{1}^{*}, \! \cdots \!\!,a_{r}^{*} \}, \{b_{1}^{*}, \! \cdots \! ,b_{r}^{*}\}) $ fixing $f^{*}$ in $E_{r}(\Lambda^{*})$;
if $\Lambda=\Lambda^{*}$, then this linear isomorphism is unimodular.\\
\noindent
It is always possible to choose\footnote{\label{FTN4} $\tau$ satisfies $\text{Im } \tau  > 0, | \tau | \geqslant 1, - \frac{1}{2} \leqslant  \text{Re }  \tau  <\frac{1}{2},  \text{Re } \tau \leqslant 0, \text{ if }    | \tau |=1.$} $\Lambda^{*}(=\Lambda_{1,\tau})$, where $(1, \tau)$ is a {\it reduced}\footnote{\label{FTN5} A pair $(\omega_{1}^{*},\omega_{2}^{*})$ of basic periods for $f^{*}$  is called {\it reduced} if $|\omega_{1}^{*}|$ is minimal among all periods for $f^{*}$ , whereas $| \omega_{2}^{*} |$ is minimal among all periods $\omega$ for $f^{*}$  with the property ${\rm Im}\frac{\omega}{\omega_{1}^{*}} >0$.}
pair of periods for $f^{*}$ and to subsequently apply the linear transformation $(1,\tau) \mapsto (1,i)$
, so that we even may assume that $(1,i)$ is a pair of reduced periods for the corresponding elliptic function on $\Lambda_{1,i}$.\\

\noindent
Consequently, unless strictly necessary, we suppress the role of $\Lambda$ and write: $E_{r}(\Lambda)=E_{r}$, $T(\Lambda)=T$ and $N_{r}(\Lambda)=N_{r}$.\\

\noindent
{\large{\bf {\small 1.1.4 Structural stability}}}\\

\noindent
The flow $\overline{\overline{\mathcal{N}}} (f) $ 
in $N_{r}$ is called 
{\it $\tau_{0}$-structurally stable}, if there is a $\tau_{0}$-neighborhood $\mathcal{O}$ of $f$, such that for all $g \in \mathcal{O}
$ we have: $\overline{\overline{\mathcal{N}}} (f) \sim \overline{\overline{\mathcal{N}}} (g)$; the set of all $\tau_{0}$-structurally stable flows $\overline{\overline{\mathcal{N}}} (f)$
 is denoted by $\tilde{N}_{r}$. \\
 
 \noindent
$C^{1}$-structural stability for $\overline{\overline{\mathcal{N}}} (f)$ implies $\tau_{0}$-structurally stability for $\overline{\overline{\mathcal{N}}} (f)$; see 
Subsubsection 1.1.2.
So, when discussing structurally stable toroidal Newton flows we skip the adjectives $\tau_{0}$ and $C^{1}$.\\

\noindent
- A structurally stable $\overline{\overline{\mathcal{N}}} (f)$ has precisely 2$r$ different simple saddles (all orthogonal).\\

\noindent
Note that if $\overline{\overline{\mathcal{N}}} (f)$ is structurally stable, then also $\overline{\overline{\mathcal{N}}} (\frac{1}{f})$, because 
$\overline{\overline{\mathcal{N}}} (\frac{1}{f})=-\overline{\overline{\mathcal{N}}} (f)$ [{\bf duality}].\\

\noindent
The main results obtained in \cite{HT1} are:\\

\noindent
- $\overline{\overline{\mathcal{N}}} (f) \in \tilde{N}_{r}$ if and only if the function $f$  is non-degenerate\footnote{\label{FTN6} i.e., all {\it zeros}, {\it poles} and {\it critical  points} for $f$ are simple, and no critical points are connected by $\overline{\overline{\mathcal{N}}} (f)$-trajectories.
} [{\bf characterization}].\\
\noindent
- The set of all non-degenerate functions of order $r$ is open and dense in $E_{r}$ [{\bf genericity}].\\

\noindent
{\large{\bf {\small 1.2 Classification \& representation of structurally stable elliptic Newton flows}}}\\

\noindent
The following three subsubsections describe shortly the main results from our paper  \cite{HT2}.\\

\noindent
{\large{\bf {\small 1.2.1 The graphs $\mathcal{G}(f )$ and $\mathcal{G}^{*}(f)$}}}\\

\noindent
For the flow $\overline{\overline{\mathcal{N}}} (f)$
in $\tilde{N}_{r},$ we define the connected (multi)graph\footnote{\label{FTN7}The graph $\mathcal{G}(f )$ has no loops, basically because the zeros for $f$ are simple.} $\mathcal{G}(f )$ of order $r$ on $T$ by:\\

\noindent
-vertices  are the $r$ zeros for $f$;\\
-edges are the 2$r$ unstable manifolds at the critical points for $f$;\\
-faces are the $r$ basins of repulsion of the poles for $f$.\\

\noindent
Similarly, we define the toroidal graph $\mathcal{G}^{*}(f)$ at the {\it repellors, stable manifolds} and {\it basins of attraction} for $\overline{\overline{\mathcal{N}}} (f)$. Apparently,  $\mathcal{G}^{*}(f)$ is the {\it geometrical dual} of $\mathcal{G}(f)$; see Fig. 1. \\

\noindent
The following features of $\mathcal{G}(f)$ reflect the main properties of the phase portraits of $\overline{\overline{\mathcal{N}}} (f)$: \\

\noindent
- $\mathcal{G}(f )$ is {\bf cellularly embedded}, i.e., each  face is homeomorphic to an open $\mathbb{R}^2$-disk;\\
- all (anti-clockwise measured) angles at an attractor in the boundary of a face that span a
sector of this face, are strictly positive and sum up to $2\pi$; [{\bf A-property}] \\
- the boundary of each $\mathcal{G}(f )$-face - as subgraph of $\mathcal{G}(f )$ - is {\it Eulerian}, i.e., admits a closed facial
walk that traverses each edge only once and goes through all vertices [{\bf E-property}].\\

\begin{figure}[h!]
\begin{center}
\includegraphics[scale=0.45]{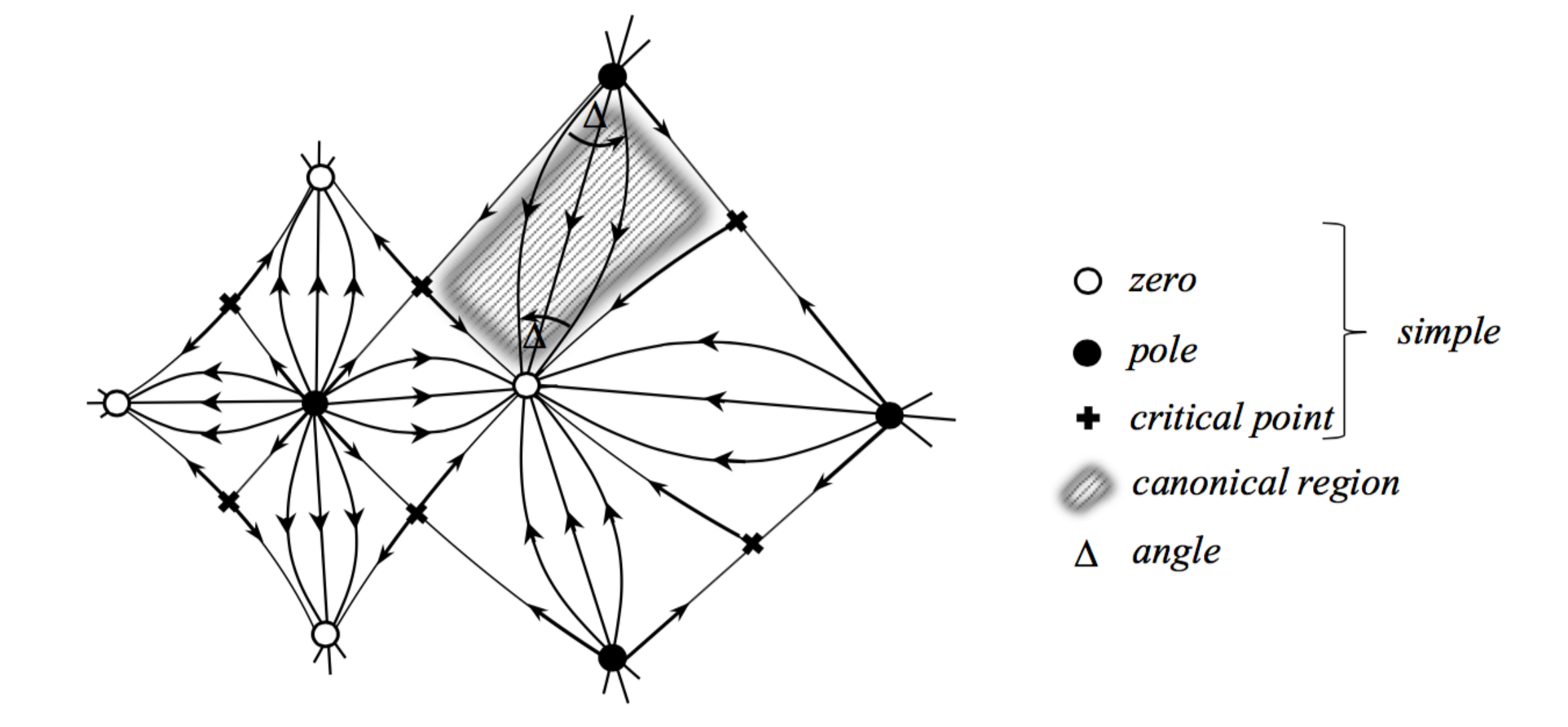}
\caption{\label{Figure2N} Basin of repulsion (attraction) of $\overline{\overline{\mathcal{N}}} (f)$ for a pole (zero) of $f$.}
\end{center}
\end{figure}

\noindent
The anti-clockwise permutation on the embedded edges at vertices of $\mathcal{G}(f )$ induces a clockwise
orientation of the facial walks on the boundaries of the $\mathcal{G}(f )$-faces; see Fig.\ref{Figure13}. On its turn, the
clockwise orientation of $\mathcal{G}(f )$-faces gives rise to a clockwise permutation on the embedded edges at
$\mathcal{G}^*(f )$-vertices, and thus to an anti-clockwise orientation of $\mathcal{G}^*(f )$. In the sequel, all graphs of the type
$\mathcal{G}(f )$, $\mathcal{G}^*(f )$, are always oriented in this way, see Fig.\ref{Figure13}. Hence, by duality,
\begin{equation}
\label{lb14}
\mathcal{G}^*(f )=-\mathcal{G}(\frac{1}{f}).
\end{equation}
It follows that also $\mathcal{G}^*(f )$ is {\it cellularly embedded} and fufills the {\it E-} and {\it A-property}.\\

\noindent
{\large{\bf {\small 1.2.2 Newton graphs}}}\\

\noindent
A connected multigraph $\mathcal{G}$ in $T$ with $r$ vertices, $2r$ edges and $r$ faces is called {\it a Newton graph} (of order $r$) if this
graph is {\it cellularly embedded} and moreover, the {\it A-property}
and the {\it E-property} hold.

\noindent
It is proved that the dual $\mathcal{G}^*$ of a Newton graph $\mathcal{G}$ is also {\it Newtonian} (of order $r$).
The anti-clockwise (clockwise) permutations on the edges of $\mathcal{G}$ at its vertices endow a clockwise
(anti-clockwise) orientation of the $\mathcal{G}$-faces and, successively, an anti-clockwise (clockwise)
orientation of the $\mathcal{G}^*$-faces, compare Fig.\ref{Figure13}).\\

\noindent
-Apparently $\mathcal{G}(f )$ and $\mathcal{G}^*(f )$ are Newton graphs.\\

\noindent
The main results obtained in \cite{HT2} are:

\noindent
-If $\overline{\overline{\mathcal{N}}} (f)$ and $\overline{\overline{\mathcal{N}}} (g)$ are structurally stable and of the same order, then:
$$
\overline{\overline{\mathcal{N}}} (f) \sim \overline{\overline{\mathcal{N}}} (g) \Leftrightarrow \mathcal{G}(f ) \sim \mathcal{G}(g ) \text{    {\bf [classification]}}.
$$
\noindent
-given a clockwise oriented Newton graph $\mathcal{G}$ of order $r$, there exists a structurally stable Newton flow $\overline{\overline{\mathcal{N}}} (f_{\mathcal{G}})$ such that $\mathcal{G}(f_{\mathcal{G}}) \sim \mathcal{G}$ and thus: ($\mathcal{H}$ is another clockwise oriented Newton graph) 
$$
\overline{\overline{\mathcal{N}}} (f_{\mathcal{G}}) \sim \overline{\overline{\mathcal{N}}} (f_{\mathcal{H}}) \Leftrightarrow \mathcal{G} \sim \mathcal{H} \text{    {\bf [representation]}}.
$$
Here the symbol $\sim$ between flows stands for conjugacy, and between graphs for equivalency (i.e. an orientation preserving isomorphism).\\

\begin{figure}[h]
\begin{center}
\includegraphics[scale=1.0]{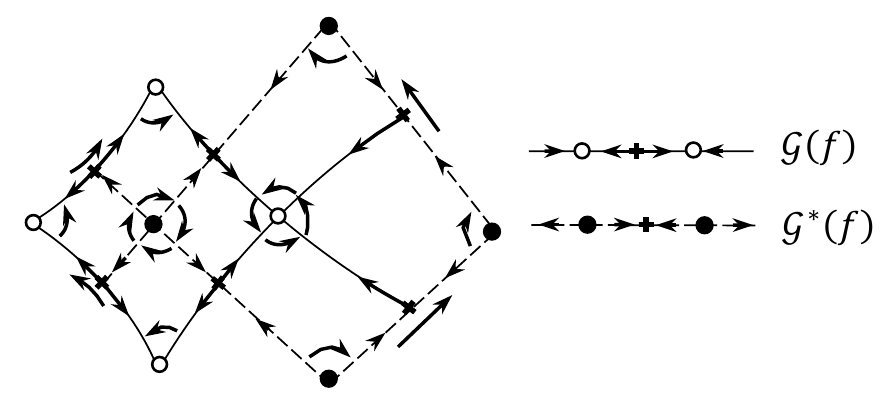}
\caption{\label{Figure13} The oriented graphs $\mathcal{G}(f )$ and $\mathcal{G}^{*}(f )$.}
\end{center}
\end{figure}

\noindent
{\large{\bf {\small 1.2.3 Characteristics for the A-property and the E-property}}}\\

Let $\mathcal{G}$ be a cellularly embedded graph of order $r$ in $T$, not necessarily fulfilling the {\it A-property}.
There is a simple criterion available for $\mathcal{G}$ to fulfil the {\it A-property}. In order to formulate this
criterion, we denote the vertices and faces of $\mathcal{G}$ by $v_i$ and $F_j$ respectively, $i, j=1, \cdots ,r$.
Now, let $J$ be a subset of $\{ 1, \cdots ,r\}$ and denote the subgraph of $\mathcal{G}$, generated by all vertices and edges incident with the faces $F_j, j \in J$, by $\mathcal{G}(J)$.The set of all vertices in $\mathcal{G}(J)$ is denoted by $V(\mathcal{G}(J))$. Then: 
\begin{equation*}
\text{$\mathcal{G}$ has {\it the A-property}} \Leftrightarrow
\text{ $|J  |<|V(\mathcal{G}(J))|$ for all $J$, $\emptyset \neq \!J\! \subsetneq \{1, \! \cdots \! ,r \}$,   [{\bf Hall condition}]}
\end{equation*}
where $|\cdot |$ stands as usual for cardinality. 
As a by-product we have:\\

\noindent
-Under the {\it A-property}, the set of exterior $\mathcal{G}(J)$-vertices (i.e., vertices in $\mathcal{G}(J)$ that are also adjacent to $\mathcal{G}$-faces, but not in $\mathcal{G}(J)$), is {\it non-empty}.\\
-Let $\mathcal{G}$ be a cellularly embedded graph of order $r$ in $T$, not necessarily fulfilling the {\it E-property}.  
We consider the {\it rotation system} $\Pi$ for $\mathcal{G}$:
$$
\Pi=\{ \pi_{v} \!\mid \! \text{all vertices $v$ in $\mathcal{G}$} \},
$$
where the {\it local  rotation system} $\pi_{v}$ at $v$ is the cyclic permutation of the edges incident with $v$ such that $\pi_{v}(e)$ is the successor of $e$ in the anti-clockwise ordering around $v$. Then, the boundaries of the faces of $\mathcal{G}$ are formally described by $\Pi$-walks as: \\

\noindent
If $e(=(v'v''))$ stands for an edge, with end vertices $v'$ and $v''$, we define a $\Pi$-{\it walk} ({\it facial walk}), say
$w$, on $\mathcal{G}$ as follows: [{\bf face traversal procedure}]\\

\noindent
Consider an edge $e_{1}=(v_{1}v_{2})$ and the closed walk
$w=v_{1}e_{1}v_{2}e_{2}v_{3} \cdots v_{k}e_{k}v_{1}$, which is determined by the requirement that, for $i=1, \cdots , \ell,$ we have $\pi_{v_{i+1}}(e_{i})=e_{i+1}$, where $e_{\ell +1}=e_{1}$ and $\ell$ is minimal.\footnote{\label{FTN3}Apparently, such "minimal" $l$ exists since $\mathcal{G}$ is finite. In fact, the first edge which is repeated in the same direction when 
traversing $w$, is $e_1$ (cf. \cite{MoTh}).}
\\ 

\noindent
Each edge occurs either once in two different $\Pi$-walks, or twice (with opposite orientations) in only
one $\Pi$-walk. $\mathcal{G}$ has the {\it E-property} iff the first possibility holds for all $\Pi$-walks. The dual $\mathcal{G}^{*}$ admits a
loop\footnote{\label{vtn9}Note that by assumption $\mathcal{G}$ has no loops.} iff the second possibility occurs at least in one of the $\Pi$-walks.

\noindent
The following observation will be referred to in the sequel:\\

\noindent
Under the {\it E-property} for $\mathcal{G}$, each $\mathcal{G}$-edge is adjacent to different faces; in fact,
any $\mathcal{G}$-edge, say $e$, determines precisely one $\mathcal{G}^{*}$-edge $e^{*}$ (and vice versa) so that
there are $2r$ ÒintersectionsÓ $s=(e, e^{*} )$ of $\mathcal{G}$- and $\mathcal{G}^{*}$-edges. We consider an abstract graph with these pairs $s$, together with the $\mathcal{G}$- and $\mathcal{G}^{*}$-vertices, as vertices, two of the vertices of this abstract graph being connected {\it iff} they are incident with the same $\mathcal{G}$- edge or $\mathcal{G}^{*}$-edge. This graph admits a cellular embedding in $T$ (cf. \cite{HT2}), which will be referred to as to the ``distinguished'' graph $\mathcal{G} \wedge \mathcal{G}^{*}$ with as faces the so-called {\it canonical regions} (compare Fig.3). Following \cite{Peix1}, $\mathcal{G} \wedge \mathcal{G}^{*}$ determines a $C^{1}$-structurally stable flow $X(\mathcal{G})$ on $T$. In \cite{HT2} we proved that, if the {\it $A$-property} holds as well, $X(\mathcal{G})$ is topologically equivalent with a structurally stable elliptic Newton flow of order $r$.\\

\noindent
{\large{\bf {\small 1.3 The Newton graphs of order $r, r=2,3$ }}}\\

\noindent
Following \cite{HT3}, we present the lists of all -up to duality and conjugacy - Newton graphs of order $r, r=2,3$. 

\begin{figure}[h!]
\begin{center}
\includegraphics[scale=0.2]{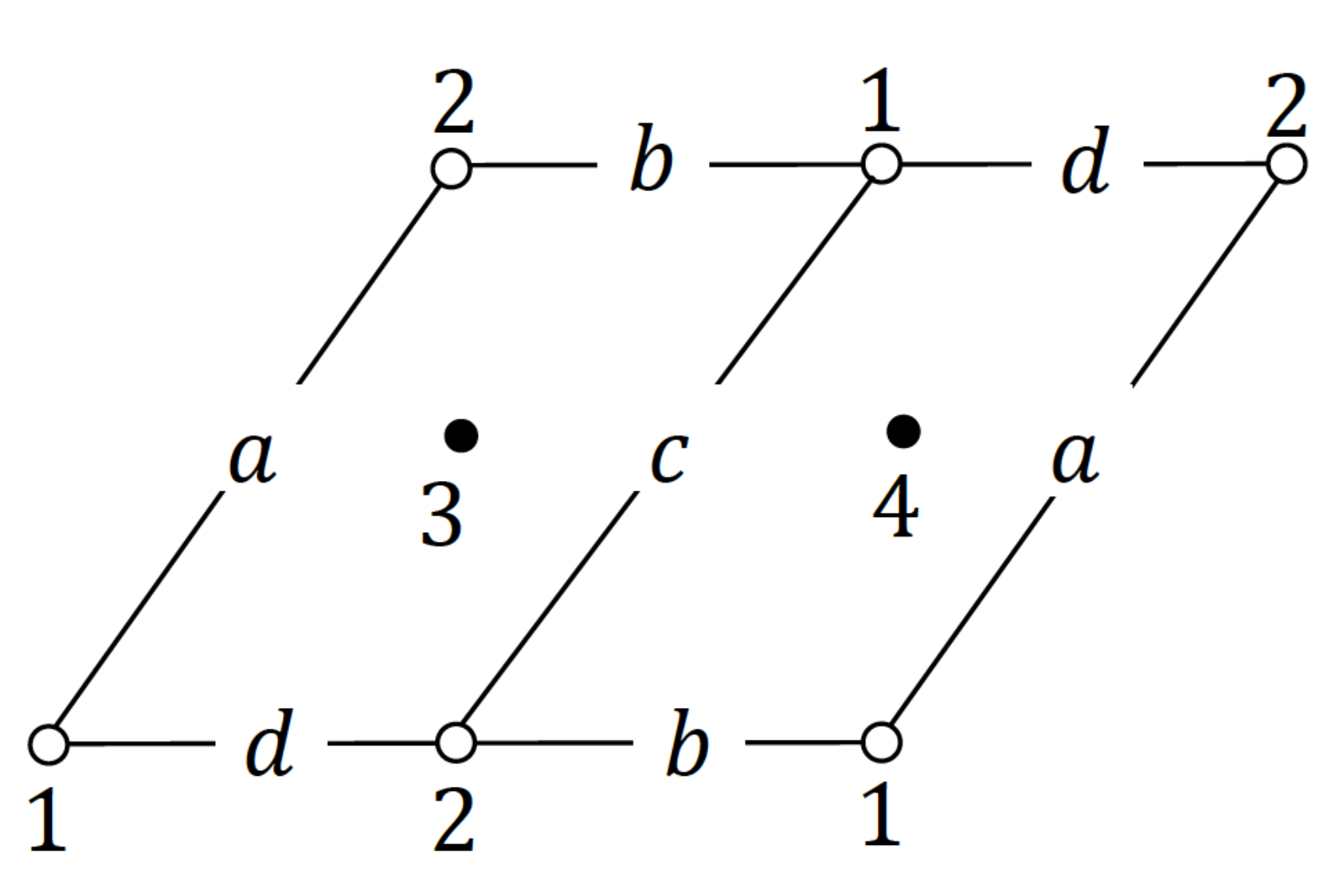}
\caption{\label{Figure9.12} The  $2^{nd}$ order Newton graph.}
\end{center}
\end{figure}

\begin{figure}[h!]
\begin{center}
\includegraphics[scale=0.6]{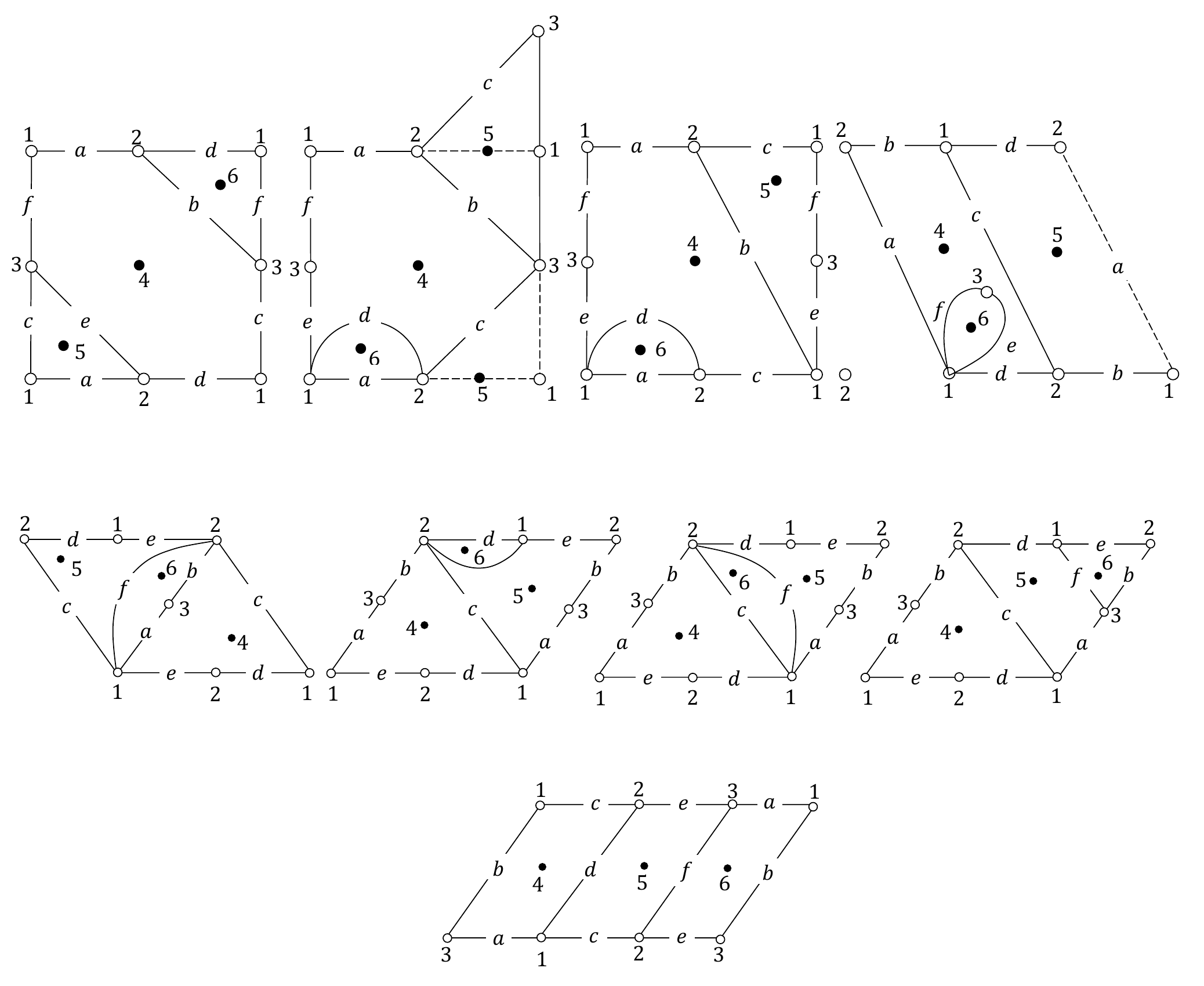}
\caption{\label{Figure9.11} The  $3^{rd}$ order Newton graphs.}
\end{center}
\end{figure}

\section{Pseudo Newton graphs }

Throughout this section, let 
$\mathcal{G}_{r}$ 
be a Newton graph
of order $r$.

Due to the {\it E-property}, we know that an arbitrary edge of $\mathcal{G}_{r}$ is contained in precisely two different faces. If we delete such an edge from $\mathcal{G}_{r}$ and merge the involved faces $F_{1}, F_{2}$ into a new face, say $F_{1,2}$, we obtain a  toroidal connected multigraph (again cellularly embedded) with $r$ vertices, $2r-1$ edges and $r-1$ faces: $F_{1,2}$, $F_{3}, \cdots , F_{r}$.\\
If $r=2$, then this graph has only one face.\\
If $r>2$, put $J=\{1,2\}$, thus $\emptyset \neq J \subsetneq \{1, \! \cdots \! ,r \}$. Then, we know, by the A-property 
(cf. Subsubsection 1.2.3)
, that the set Ext$(\mathcal{G}(J))$ of exterior $\mathcal{G}(J)$-vertices is non-empty. 
Let $v \in $ Ext$(\mathcal{G}(J))$, thus $v \in \partial F_{1,2} $.
Hence, $v$ is incident with an edge, adjacent only to  one of the faces $F_{1}$, $F_{2}$.
Delete this edge and obtain the ``merged face'' $F_{1,2,3}$. \\
If $r=3$, the result is a graph with only {\it one} face.

If $r >3$, put $J=\{1,2,3 \}$. By the same reasoning as used in the case $r=3$, it can be shown 
that $\partial F_{1,2,3}$ contains an edge belonging to another face than $F_{1}, F_{2} \text{ or }F_{3}$, say $F_{4}$ . 
Delete this edge and obtain the ``merged face'' $F_{1,2,3,4}$.
And so on. In this way, we obtain - in $r\!-\!1$ steps - a connected cellularly embedded multigraph, say $\check{\mathcal{G}}_{r}$
, with $r$ vertices, $r+1$ edges and only one face. 

Obviously, $\check{\mathcal{G}}_{r}$ contains vertices of degree $\geqslant 2$. Let us assume that there exists a vertex for $\check{\mathcal{G}}_{r}$, say $v$, with deg($v$)=1. If we delete this vertex from $\check{\mathcal{G}}_{r}$, together with the edge incident with $v$, we obtain a graph with ($r-1$) vertices, $r$ edges and {\it one} face. If this graph contains also a vertex of degree 1, we proceed successively. The process stops after $L$ steps, resulting into a (connected, cellularly embedded) multigraph, say $\hat{\mathcal{G}}_{\rho}$. This graph admits $\rho=r\!-\!L$
vertices (each of degree $\geqslant 2$), $\rho+1$ edges and one face.\\

\noindent
Apparently\footnote{Assume $L=r-1$.Then $\hat{\mathcal{G}}_{\rho}, \rho =1,$ would be a connected subgraph of $\mathcal{G}$, with two edges and one vertex; this contradicts the fact that $\mathcal{G}$ has no loops. Compare the forthcoming Definition \ref{ND8.5}.
} we have:  $
L<r\!-\!1 \text{ and thus }2 \leqslant \rho \leqslant r.$ In particular:\\ 
if $r=2$, then $\rho=2 \text{ and } L=0$.  \\
if $r=3$, then $\rho=3 \text{ and } L=0$, or $\rho=2 \text{ and } L=1$; see also Fig.\ref{Figure20}.\\

\noindent
From Subsection 1.3, Fig.\ref{Figure9.12},
it follows that
$\mathcal{G}_{2}$  is unique (up to equivalency).

\noindent
From the forthcoming Corollary \ref{NC8.2} it follows that also $\check{\mathcal{G}}_{2}$  is unique. However, a graph $\check{\mathcal{G}}_{r}, r>2,$ is not uniquely determined by $\mathcal{G}_{r}$, as will be clear from Fig.\ref{Figure20}, where $\mathcal{G}_{3}$ is a Newton graph
(cf. Subsubsection 1.2.3 or Fig.\ref{Figure9.11}(iii)).

\begin{lemma}
\label{NL8.1}
For the graphs $\hat{\mathcal{G}}_{\rho}$ we have:\\
\begin{enumerate}
\item[$(a_{1})$] Either, two vertices are of degree 3, and all other vertices of degree 2, or
\end{enumerate}
\begin{enumerate}
\item[$(a_{2})$] 
one vertex is of degree 4, and all other vertices of degree 2. 
\item[$(b_{})$] 
There is a closed, clockwise oriented facial walk, say $w$, of length $2(\rho+1$) such that, traversing $w$, each
vertex $v$ shows up precisely deg$ (v)$ times. Moreover, $w$ is divided into subwalks $W_{1}$, $W_{2}, \cdots ,$ connecting vertices of degree $>2$
that, apart from these begin- and endpoints, contain only-if any- vertices of degree 2. 
\end{enumerate}
\begin{enumerate}
\item[$(c_{1})$] 
If $e_{1}e_{2} \cdots e_{s}$
is a walk of type $W_{i}$, then also $
W_{i}^{-1}:=e_{s}^{-1} \cdots  e_{2}^{-1}e_{1}^{-1},
$
where $e$ and $e^{-1}$ stand for the same edge, but with opposite orientation.
\item[$(c_{2})$]
 The subwalks $W_{i}$ and $W_{i}^{-1}$ are not consecutive in $w$.
\item[$(c_{3})$]
In Case $(a_{1})$, there are precisely 6 subwalks of type $W_{i}$, each of them connecting different vertices (of degree 3). 
In fact there holds: $
w=W_{1}W_{2}W_{3}W_{1}^{-1}W_{2}^{-1}W_{3}^{-1}.
$
\item[$(c_{4})$]
In Case $(a_{2})$,  there are precisely 4 subwalks of type $W_{i}$ , each of them containing
at least one vertex of degree 2. In fact we have: $
w=W_{1}W_{2}W_{1}^{-1}W_{2}^{-1}.
$
\end{enumerate}
\end{lemma}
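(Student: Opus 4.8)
\emph{Proof idea.} Everything will be deduced from two elementary inputs: the handshake lemma and the structure of the \emph{unique} facial walk of a one-face cellular embedding.

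\emph{Items $(a_{1})$, $(a_{2})$.} Since $\hat{\mathcal{G}}_{\rho}$ has $\rho$ vertices and $\rho+1$ edges, $\sum_{v}\deg(v)=2(\rho+1)$, hence $\sum_{v}(\deg(v)-2)=2$. As every vertex has degree $\geqslant 2$ and $\hat{\mathcal{G}}_{\rho}$ has no loops (being a subgraph of the Newton graph $\mathcal{G}_{r}$, which has none), this surplus $2$ can only be distributed as ``two vertices of degree $3$'' or ``one vertex of degree $4$'', all remaining vertices having degree $2$. In particular $\hat{\mathcal{G}}_{\rho}$ has at least one vertex of degree $>2$.

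\emph{Item $(b)$.} As $\hat{\mathcal{G}}_{\rho}$ is cellularly embedded with a single face, the face traversal procedure (cf. Subsubsection 1.2.3) applied to the rotation system of $\hat{\mathcal{G}}_{\rho}$ yields exactly one $\Pi$-walk $w$; by the count recalled there each edge occurs twice in $w$ (necessarily with opposite orientations, since there is only one $\Pi$-walk), so $w$ has length $2(\rho+1)$, and since each angle (``corner'') at a vertex $v$ is met by precisely one facial walk, $v$ occurs in $w$ exactly $\deg(v)$ times. The orientation conventions of Subsubsection 1.2.1 make $w$ clockwise. Cutting $w$ at each occurrence of a vertex of degree $>2$ splits it into the subwalks $W_{1},W_{2},\dots$; their interior vertices automatically have degree $2$, and by the previous step this splitting is non-empty.

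\emph{Items $(c_{1})$, $(c_{2})$.} At a vertex $v$ of degree $2$ the local rotation $\pi_{v}$ is the transposition of the two edges at $v$, so on each of its two visits $w$ passes straight through $v$, the second visit running opposite to the first. Propagating this along $W_{i}=e_{1}\cdots e_{s}$ shows that the ``second copies'' of $e_{1},\dots,e_{s}$ link up into $W_{i}^{-1}=e_{s}^{-1}\cdots e_{1}^{-1}$, again a subwalk of $w$ delimited by vertices of degree $>2$; this is $(c_{1})$. Also $W_{i}\neq W_{i}^{-1}$, for otherwise the first and last edges of $W_{i}$ coincide, forcing an interior degree-$1$ vertex or a loop. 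If $W_{i}$ and $W_{i}^{-1}$ were consecutive in $w$, then at the vertex $v$ joining them $w$ would leave along the edge $e$ by which it had just arrived, i.e. $\pi_{v}(e)=e$, so $\deg(v)=1$; but $v$ is an endpoint of $W_{i}$, hence of degree $>2$, a contradiction. This is $(c_{2})$.

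\emph{Items $(c_{3})$, $(c_{4})$.} The endpoints of the $W_{i}$ are precisely the vertices of degree $>2$, and each such vertex $v$ occurs $\deg(v)$ times in $w$; so the number of subwalks equals $\sum_{\deg(v)>2}\deg(v)$, namely $6$ in Case $(a_{1})$ and $4$ in Case $(a_{2})$, and by $(c_{1})$ they fall into pairs $\{W_{i},W_{i}^{-1}\}$. In Case $(a_{2})$, with unique degree-$4$ vertex $u$, every $W_{i}$ runs from $u$ to $u$ and, no loop being available, has length $\geqslant 2$, hence contains a degree-$2$ vertex; and a cyclic word in $W_{1},W_{1}^{-1},W_{2},W_{2}^{-1}$ in which neither pair is consecutive is, up to relabelling, $W_{1}W_{2}W_{1}^{-1}W_{2}^{-1}$, which gives $(c_{4})$. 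In Case $(a_{1})$ let $u,u'$ be the degree-$3$ vertices and let $p,q,s$ count the pairs joining $u$ to $u'$, $u$ to $u$, $u'$ to $u'$; from $p+2q=\deg(u)=3$ and $p+2s=\deg(u')=3$ one gets $(p,q,s)=(3,0,0)$ or $(1,1,1)$. The decisive point of the whole lemma is the exclusion of $(1,1,1)$, and I would argue as follows: along the cyclic word $w$ the ``current location'' switches between $u$ and $u'$ only across the single $u$-$u'$ pair, so the remaining four subwalks split into one block lying entirely at $u$ and one lying entirely at $u'$, each block therefore a single pair $\{W_{i},W_{i}^{-1}\}$ — contradicting $(c_{2})$. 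Hence $(p,q,s)=(3,0,0)$: all six subwalks join the \emph{distinct} vertices $u,u'$, three directed $u\to u'$ and three $u'\to u$, so $w$ alternates the two types, and imposing $(c_{2})$ forces the cyclic order $W_{1}W_{2}W_{3}W_{1}^{-1}W_{2}^{-1}W_{3}^{-1}$ after relabelling. This is $(c_{3})$.
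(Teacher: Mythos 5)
Your proof is correct, and for items $(a)$, $(b)$, $(c_{1})$, $(c_{2})$ and $(c_{4})$ it follows essentially the same route as the paper: the handshake count for $(a)$, the single facial walk of a one-face cellular embedding together with the sector count for $(b)$, propagation through degree-$2$ vertices for $(c_{1})$, and a no-backtracking argument for $(c_{2})$ (the paper phrases the resulting contradiction as ``$\hat{\mathcal{G}}_{\rho}$ would have a loop'' rather than ``a degree-$1$ vertex'', but the substance is the same). The one genuine divergence is in $(c_{3})$. The paper argues directly: if some subwalk $W_{1}$ were closed at a degree-$3$ vertex $u$, then $W_{1}$ and $W_{1}^{-1}$, being non-adjacent by $(c_{2})$, would account for at least four distinct occurrences of $u$ in $w$, whereas $u$ occurs only $\deg(u)=3$ times. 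You instead classify the pairs by endpoint type, reduce to $(p,q,s)\in\{(3,0,0),(1,1,1)\}$, and exclude $(1,1,1)$ by noting that the unique $u$--$u'$ pair is the only place where the ``current vertex'' can switch, which forces the $u$--$u$ pair to be consecutive and so contradicts $(c_{2})$. Both arguments are sound; the paper's is shorter, while yours makes the excluded combinatorial configuration explicit and also records the (implicitly needed) fact that $W_{i}\neq W_{i}^{-1}$, which the paper passes over in silence.
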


\begin{proof}
ad ($a$): Each edge of $\hat{\mathcal{G}}_{\rho}$ contributes precisely twice to the set $\{\text{deg}(v), v \in V(\hat{\mathcal{G}}_{\rho})\}$. 
It follows:
\begin{equation}
\label{vgl30}
\sum_{\text{all $\hat{\mathcal{G}}_{\rho}$-vertices $v$}} \text{ deg}(v)=2(\rho +1).
\end{equation}
Put $k_{i}\!=\! \sharp \{ \text{vertices of degree } i \}, i=1,2,3, \cdots $. 
Then (\ref{vgl30}) yields:
$$
2k_{2}\!+\!3k_{3}\!+\!4k_{4}\!+\!5k_{5}\!+ \!\cdots =2(k_{2} \!+\!k_{3}\!+\!k_{4}\!+\!k_{5} \!+\! \cdots \!+\!1) (=2(\rho +1)).
$$
Note that one uses here also that $\hat{\mathcal{G}}_{\rho}$ has $\rho$ vertices and all these vertices have degree $\geqslant 2$.
Thus, either $k_{2}=\rho\! -\!2, k_{3}=2,$ $k_{i} =0$ if $i \neq 2, 3,$  or  $k_{2}=\rho \!-\!1$, $k_{4}=1$, $k_{i} =0$ if $i \neq 2, 4$.

ad ($b$): The geometrical dual $(\hat{\mathcal{G}}_{\rho})^{*}$ of $\hat{\mathcal{G}}_{\rho}$ has only one vertex. So, all edges of $(\hat{\mathcal{G}}_{\rho})^{*}$ are loops. Hence, in the {\it facial} walk $w$ of $\hat{\mathcal{G}}_{\rho}$, each edge shows up precisely twice (with opposite orientation) cf. Subsubsection 1.2.3. Thus $w$ has length $2(\rho+1)$. By the face traversal procedure, each facial sector of $\hat{\mathcal{G}}_{\rho}$ is encountered once and -at a vertex $v$- there are deg($v$) many of such sectors. Application of ($a_{1}$) and ($a_{2}$) yields the second part of the assertion.

ad ($c_{1}$):
Let $e_{1}ve_{2}$
be a 
subwalk of $w$ with deg$(v)=2$. Both $e_{1}^{-1}$ and $e_{2}^{-1}$ occur precisely once in $w$, and $e_{2}^{-1}ve_{1}^{-1}$
is a subwalk of $w$. 

ad ($c_{2}$): If the subwalk $W=e_{1\!}e_{2} \cdots  e_{s}$ 
and its inverse are consecutive, then -by the face traversal procedure- 
$e_{1}^{-1}v_{1} e_{1}$, or  $e_{s}v_{s}e_{s}^{-1}$
are subwalks of the facial walk $w$. In the first case, $v_{1}$ is both begin- and endpoint of $e_{1}$; in the second case, $v_{s}$ is both begin- and endpoint of $e_{s}$. This would imply that $\hat{\mathcal{G}}_{\rho}$ has a loop which is excluded by construction of this graph.

ad ($c_{3}$): Note that, traversing $w$ once, each of the two vertices of degree 3 is encountered thrice. Suppose that the begin en and points of one of the subwalks $W_{1},W_{2}, \cdots $, say $W_{1},$ coincide. Then, this also holds for the subwalk $W_{1}^{-1}$ being-by ($c_{2}$)-not adjacent to $W_{1}$. So, traversing $w$ once, this common begin/endpoint is encountered at least four times; in contradiction with our assumption. The assertion is an easy consequence of ($a_{1}$) and ($c_{2}$).

ad ($c_{4}$): Traversing $w$ once, the vertex of degree 4 is encountered four times. In view of ($a_{2}$) and ($c_{2}$), we find four closed subwalks of $w$  namely: $W_{1}, W_{1}^{-1}$ (not adjacent) and $W_{2}, W_{2}^{-1}$ (not adjacent). Finally we note that each of these subwalks must contain at least one vertex of degree 2 (since $\hat{\mathcal{G}}_{\rho}$ has no loops).
\end{proof}

\begin{figure}[h!]
\begin{center}
\includegraphics[scale=0.70]{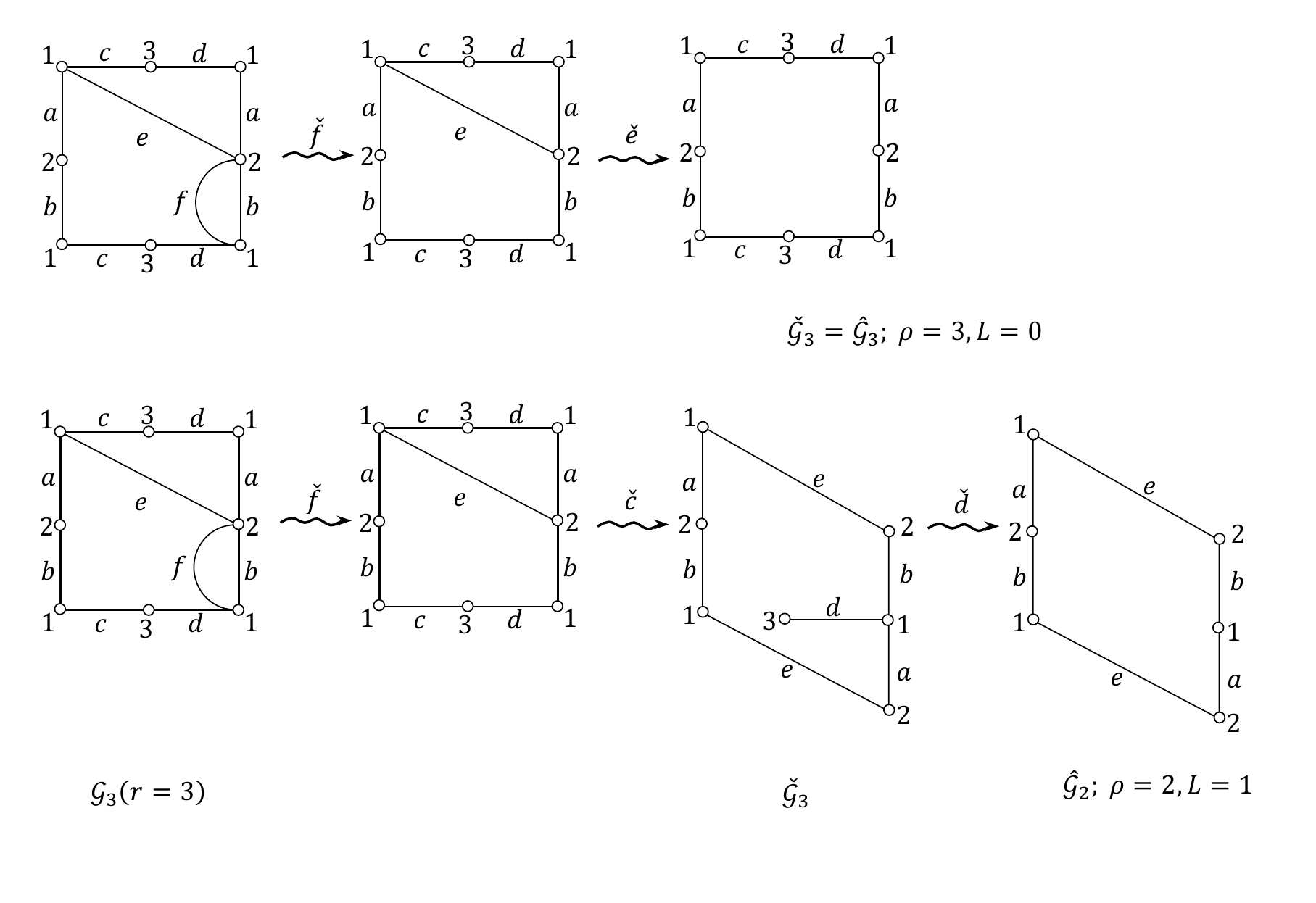}
\caption{\label{Figure20}Some graphs $\mathcal{G}_{3}$, $\check{\mathcal{G}}_{3}$ and $\hat{\mathcal{G}}_{\rho}$.}
\end{center}
\end{figure}

\begin{figure}[h!]
\begin{center}
\includegraphics[scale=0.76]{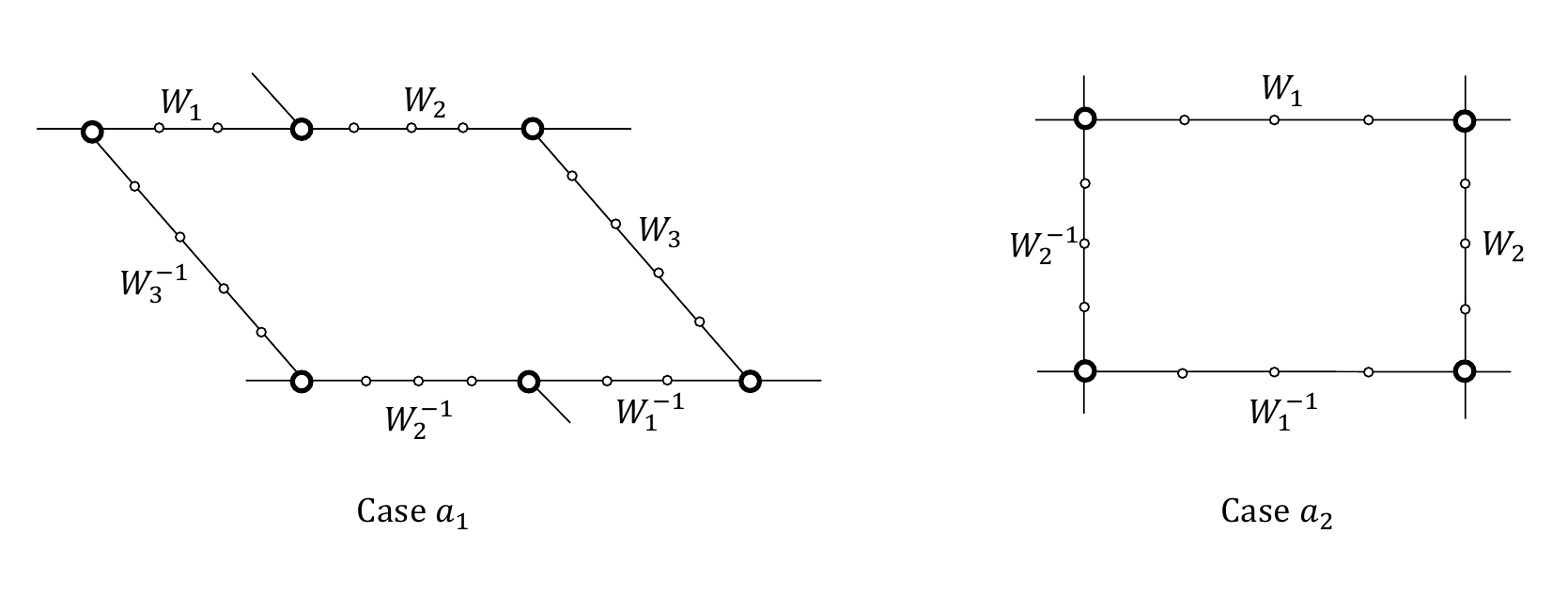}
\caption{\label{Figure21}The graphs $\hat{\mathcal{G}}_{\rho}$.}
\end{center}
\end{figure}

An analysis of its rotation system learns that $\hat{\mathcal{G}}_{\rho}$ is determined by its facial walk $w$, and thus also by the subwalks 
$W_{1}W_{2}W_{3}$ (in Case $a_{1}$) or $W_{1}W_{2}$ (in Case $a_{2}$). In fact, only the length of the subwalks $W_{i} $ matters. 
\begin{corollary}
\label{NC8.2}
The graphs $\hat{\mathcal{G}}_{2}$ and $\hat{\mathcal{G}}_{3}$ can be described as follows:
\begin{itemize}
\item 
By Lemma \ref{NL8.1} it follows that $\hat{\mathcal{G}}_{2}$ does not have a vertex of degree 4. So, $\hat{\mathcal{G}}_{2}$ is of the form as depicted in Fig.\ref{Figure21}-$a_{1}$, where each subwalk $W_{i}$ admits only one edge. Hence, there is-up to equivalency- only one possibility for $\hat{\mathcal{G}}_{2}$. Compare also Fig.\ref{Figure9.12}. 
\item
It is easily verified that -in Fig.\ref{Figure22}- each graph (on solid and dotted edges) is 
a Newton graph $($cf. Subsubsection 1.2.3 $)$.
Hence, in case $\rho=3$, both alternatives in Lemma \ref{NL8.1}-$(a)$ occur. An analysis of their rotation systems learns that the three graphs with only solid edges 
in Fig.\ref{Figure22}-$(i)-(iii)$ are equivalent, but not equivalent with the graph on solid edges in Fig.\ref{Figure22}-$(iv)$. In a similar way it can be proved that the graphs  in Fig.\ref{Figure22} expose all possibilities(up to equivalency) for $\hat{\mathcal{G}}_{3}$.
\end{itemize}
\end{corollary}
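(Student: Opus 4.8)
The two assertions are handled separately, in each case by combining the degree/edge bookkeeping of Lemma~\ref{NL8.1} with the remark made just above the Corollary: $\hat{\mathcal{G}}_{\rho}$ is completely determined, up to equivalency, by the lengths of the subwalks $W_{i}$ of its facial walk $w$.

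\smallskip
\noindent
\emph{The graph $\hat{\mathcal{G}}_{2}$.} Here $\rho=2$: two vertices, three edges, one face, and $w$ has length $6$. First I would exclude Case~$(a_{2})$ of Lemma~\ref{NL8.1}. Indeed, in that case there is a single vertex of degree $2$, while by $(c_{4})$ the walk $w=W_{1}W_{2}W_{1}^{-1}W_{2}^{-1}$ splits into four subwalks, each containing at least one vertex of degree $2$ in its interior; since interior vertices of subwalks never sit at the junctions between consecutive subwalks, the unique degree-$2$ vertex would then be met at least four times while traversing $w$, contradicting that it is met exactly $\deg=2$ times. Hence Case~$(a_{1})$ holds, so $k_{2}=0$, $k_{3}=2$, and by $(c_{3})$ we have $w=W_{1}W_{2}W_{3}W_{1}^{-1}W_{2}^{-1}W_{3}^{-1}$ with each $W_{i}$ a single edge; this accounts for all three edges and joins the two distinct degree-$3$ vertices. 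Since $w$ and the induced rotation system now determine the graph, $\hat{\mathcal{G}}_{2}$ is unique up to equivalency, and a direct check shows it is the graph of Fig.~\ref{Figure9.12}.

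\smallskip
\noindent
\emph{The graph $\hat{\mathcal{G}}_{3}$.} Here $\rho=3$: three vertices, four edges, one face, and $w$ has length $8$. I would first show that both alternatives of Lemma~\ref{NL8.1}$(a)$ actually occur, by verifying \emph{directly} that the graphs of Fig.~\ref{Figure22} on solid-plus-dotted edges are Newton graphs of order $3$: the A-property via the Hall condition of Subsubsection~1.2.3 (checking $|J|<|V(\mathcal{G}(J))|$ for the proper nonempty $J\subsetneq\{1,2,3\}$) and the E-property via the face-traversal/$\Pi$-walk criterion; deleting the dotted edges according to the construction of Section~2 then produces the relevant $\hat{\mathcal{G}}_{3}$. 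For the classification, in Case~$(a_{1})$ one has $k_{2}=1$, $k_{3}=2$, so the three subwalks $W_{1},W_{2},W_{3}$ join the two distinct degree-$3$ vertices, have lengths $\geq1$ summing to $4$, and each can carry at most one interior vertex (of degree $2$); since the unique degree-$2$ vertex lies in the interior of exactly one $W_{i}$, the length multiset is forced to be $\{2,1,1\}$. In Case~$(a_{2})$ one has $k_{2}=2$, $k_{4}=1$, and by $(c_{4})$ the subwalks $W_{1},W_{2}$ each have length $\geq2$ and sum to $4$, so $|W_{1}|=|W_{2}|=2$; thus $\hat{\mathcal{G}}_{3}$ is the degree-$4$ vertex carrying two length-$2$ ``petals'' through the two distinct degree-$2$ vertices. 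In either case the facial walk, hence the rotation system, is pinned down up to the relabelling/cyclic freedom permitted by $(c_{2})$; inspecting the finitely many admissible arrangements shows that Case~$(a_{1})$ yields a single graph up to equivalency (drawn as Fig.~\ref{Figure22}$(i)$--$(iii)$) and Case~$(a_{2})$ yields a single graph (Fig.~\ref{Figure22}$(iv)$). These two are inequivalent since their degree sequences $(3,3,2)$ and $(4,2,2)$ differ, and no further possibilities remain.

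\smallskip
\noindent
\emph{Main obstacle.} The delicate point is the last step for $\rho=3$ in Case~$(a_{1})$: one must check that the various cyclic orderings of $W_{1},W_{2},W_{3}$ compatible with $(c_{2})$ all give toroidal embeddings related by an orientation-preserving homeomorphism --- i.e.\ that the three pictures $(i)$--$(iii)$ of Fig.~\ref{Figure22} are genuinely redrawings of one and the same embedded graph rather than distinct embeddings. Everything else (the edge counts, the exclusion of Case~$(a_{2})$ when $\rho=2$, and the Hall/E-property verification for the small explicit graphs of Fig.~\ref{Figure22}) is routine.
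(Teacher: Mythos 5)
Your proposal is correct and takes essentially the same route as the paper: it combines Lemma~\ref{NL8.1} with the principle, stated immediately before the Corollary, that $\hat{\mathcal{G}}_{\rho}$ is determined up to equivalency by the lengths of the subwalks $W_{i}$ of its facial walk $w$, together with a direct verification that the solid-plus-dotted graphs of Fig.~\ref{Figure22} are Newtonian. You merely make explicit the bookkeeping that the paper compresses into ``an analysis of the rotation systems'' --- excluding Case~$(a_{2})$ for $\rho=2$ and forcing the length multisets $\{1,1,1\}$, $\{2,1,1\}$ and $\{2,2\}$ --- which is a faithful expansion of the same argument rather than a different method.
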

\begin{definition}
\label{D2.1}
{\it Pseudo Newton graphs}\\
Cellularly embedded toroidal graphs, obtained from $\mathcal{G}_{r}$ by deleting edges and vertices in the way as described above, are called {\it pseudo Newton graphs} (of order $r$).
\end{definition}
Apparently, a pseudo Newton graph is not a Newton graph by itself. Replacing (in the inverse order) into $\hat{\mathcal{G}}_{\rho}, \rho=r-L,$ 
the edges and vertices that we have deleted from $\mathcal{G}_{r}$ , we regain subsequently $\check{\mathcal{G}}_{\rho}$ and $\mathcal{G}_{r}$. A pseudo Newton graph of order $r$ has either one face with angles summing up to the number ($\rho^{'}$) of its vertices and ($\rho^{'}+1$)
edges ($2 \leqslant \rho \leqslant \rho^{'} \leqslant r$) or one face with angles summing up to $r', r-r'$ faces with angles summing up to 1 and altogether $2r-r'+1$ edges ($2 \leqslant r, 1<r' <r$). 

\begin{remark}
\label{NR8.4}
If we delete from $\hat{\mathcal{G}}_{\rho}$ an arbitrary edge, the resulting graph remains connected, but the Ôalternating sum of vertices, edges and faceÕ equals +1. Thus one obtains a graph that is not cellularly embedded. 
\end{remark}

\begin{figure}[h]
\begin{center}
\includegraphics[scale=0.76]{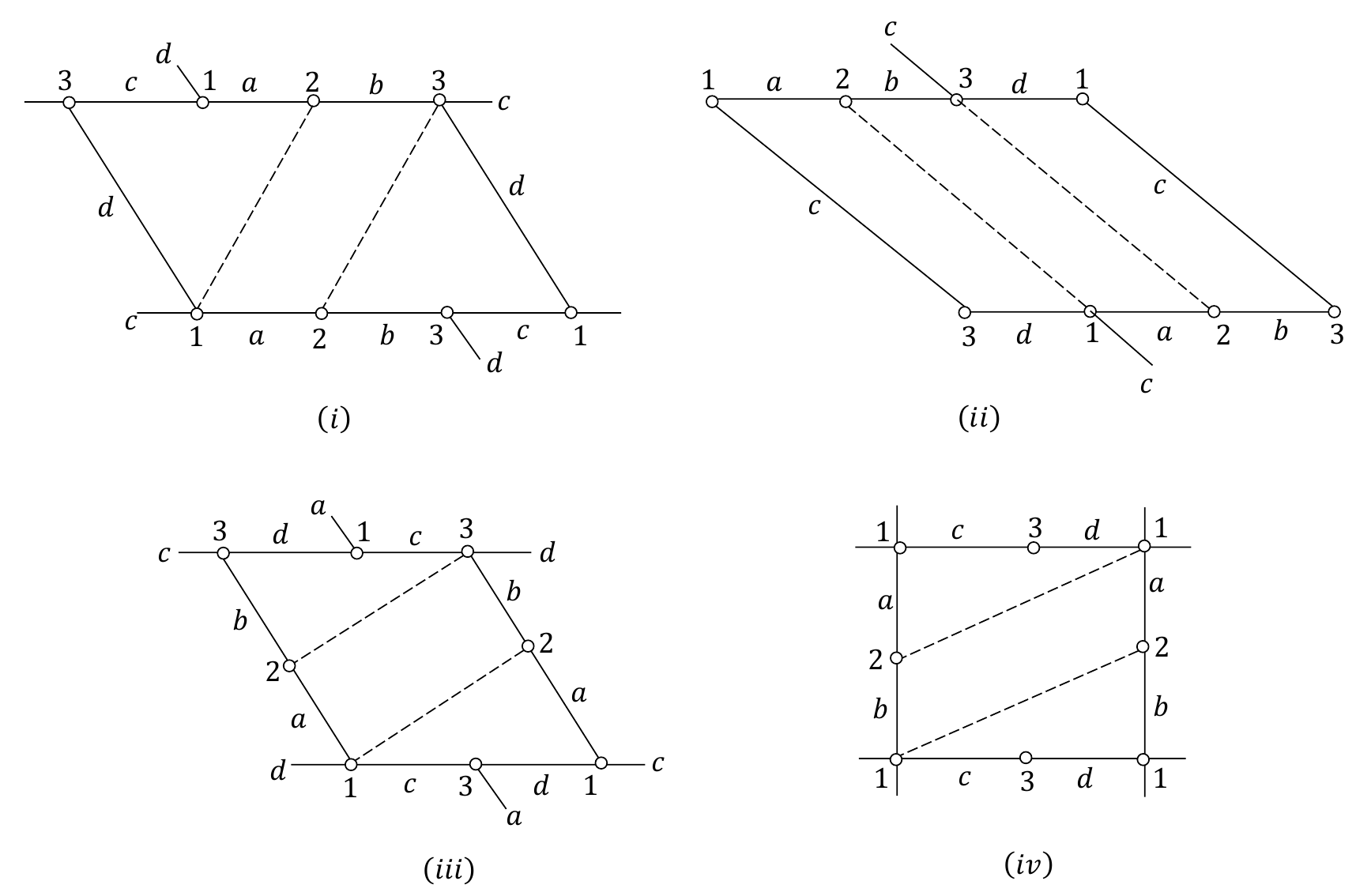}
\caption{\label{Figure22}All possible graphs $\hat{\mathcal{G}}_{3}$.}
\end{center}
\end{figure}

\begin{definition}
\label{ND8.5}
A {\it Nuclear Newton graph} is a cellularly embedded graph in $T$ with one vertex and two edges.
\end{definition}

\noindent
Apparently 
a Nuclear Newton graph is connected and admits one face and two loops. In particular such a graph has a trivial rotation system. Hence, all nuclear Newton graphs are topologically equivalent and since they expose the same structure as the pseudo Newton graphs $\hat{\mathcal{G}}_{\rho}$, they will be denoted by $\hat{\mathcal{G}}_{1}$. Note that 
a nuclear Newton graph fulfils the {\it A-property} (but certainly not the {\it E-property}). Consequently, a graph of the type $\hat{\mathcal{G}}_{1}$ is neither a Newton graph, nor  equivalent with a graph $\mathcal{G}(f), f \in \check{E}_{r}$. Nevertheless, nuclear Newton graphs will play an important role because, in a certain sense, they ``generate'' certain structurally stable Newton flows. This will be explained in the sequel.

\section{Nuclear elliptic Newton flow}

Throughout this section, let $f$ be an elliptic function with -viewed to as to a function on $T=T(\Lambda(\omega_{1}, \omega_{2}))$
- only one zero and one pole, both of order $r, r\geqslant 2$. Our aim is to derive the result on the corresponding (so-called nuclear) Newton flow $\overline{\overline{\mathcal{N}} }(f)$ that was already announced in \cite{HT1}, 
Remark 5.8. To be more precise:
\begin{align*}
&\text{`` All nuclear Newton flows -of any order $r$- are conjugate
, in particular each of them}\\
&\text{ has precisely two saddles (simple) and there are no saddle connections''.}
\end{align*}

\noindent
When studying -up to conjugacy-
the flow $\overline{\overline{\mathcal{N}} }(f)$, we may assume 
(cf. Subsubsection 1.1.3)
that 
$\omega_{1}=1, \omega_{2}=i,$ thus $\Lambda=\Lambda_{1,i}$.
In particular, the period pair 
$(1,i)$
is reduced. 
We represent $f$ (and thus  $\overline{\overline{\mathcal{N}} }(f)$), by the $\Lambda$-classes $[a], [b]$
, where $a$ resp. $b$
stands for the zero, resp. pole, for $f$, situated in the period parallelogram $P(=P_{1,i})$. 
Due to (2), (3)
we have:
\begin{equation*}
b=  a+\frac{\lambda_{0}}{r}.
\end{equation*}
We may assume that $a$, $b$ 
are not on the boundary $\partial P$ of $P$.
Since the period pair $(1,i)$
is reduced, the images under $f$ of the $P$-sides $\gamma_{1}$ and $\gamma_{2}$
are closed Jordan curves 
(use the explicit formula for $\lambda^{0}$ as presented in Footnote 3).
From this, we find 
that the winding numbers $\eta(f(\gamma_{1}))$ and $\eta(f(\gamma_{2}))$
can -a priori- only take the values -1, 0 or +1. The combination 
$(\eta(f(\gamma_{1})),\eta(f(\gamma_{2})))=(0, 0)$ is impossible (because  $a \neq  b$).
The remaining combinations lead, for each value of $r=2,3, \cdots $, to eight different values for $b$ each of which giving rise, together with $a$, to eight pairs of classes $\mathrm {mod} \, \Lambda$ that fulfil (\ref{vgl9}), determining flows in $N_{r}(\Lambda)$, compare Fig. \ref{Figure25}, where we assumed -under a suitable translation of $P$-that  $a=0$.

\begin{figure}[h]
\begin{center}
\includegraphics[scale=0.6]{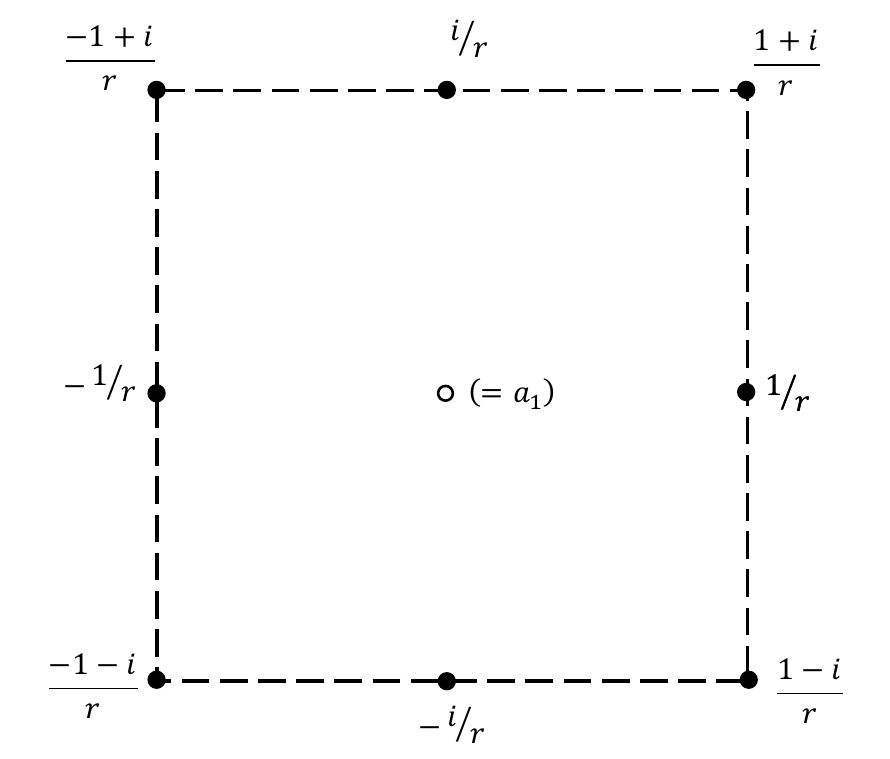}
\caption{\label{Figure25} Eight pairs $(a,b)$ determining a priori possible nuclear flows in $N_{r}(\Lambda_{1,i}).$}
\end{center}
\end{figure}

Note that the derivative $f'$ is elliptic of order $r+1$. Since there is on $P$ only one zero for $f$ (of order $r$), 
the function $f$ has two critical points, i.e., saddles for $\overline{\overline{\mathcal{N}} }(f)$, counted by multiplicity.

The eight pairs $(a, b)$ that possibly determine a nuclear Newton flow are subdivided into two classes, each containing four configurations $(a, b)$: (see Fig.\ref{IVFigure9})\\

\noindent
{\bf Class 1:} $a=0$, $b$ on a side of the period square $P $.\\
{\bf Class 2:} $a=0$, $b$ on a diagonal of the period square $P $, but not on $\partial P$.\\

\noindent
Apparently, two nuclear Newton flows represented by configurations in the same class are related by a {\it unimodular} transformation on the period pair $(1,i)$, and are thus conjugate, see Subsubsection 1.1.3. 
So, it is enough to study nuclear Newton flows, possibly represented by $(0, \frac{1}{r})$ or $(0, \frac{1+i}{r})$.\\

\noindent
\underline{The configuration $(a, b), a=0, b=\frac{1}{r}:$}\\
The line $\ell_{1}$ between 0 and 1, and the line $\ell_{2}$ between $\frac{1}{i}$ and $1+ \frac{1}{i}$, are axes of mirror symmetry with respect to this configuration (cf. Fig.\ref{IVFigure10}-(i)). By the aid of this symmetry and using the double periodicity of the supposed flow
, it is easily proved that this configuration can not give rise to a desired nuclear Newton flow.\\

\noindent
\underline{The configuration $(a, b), a=0, b=\frac{1+i}{r}:$}\\
The line $\ell$ between 0 and $\frac{1+i}{2}$ is an axis of mirror symmetry with respect to this configuration (cf. Fig.\ref{IVFigure10}-(ii)). So the two saddles of the possible nuclear Newton flow are situated either on the diagonal of $P$ through $\frac{1+i}{r}$, or not on this diagonal but symmetric with respect to $\ell_{1}$. The first possibility can be ruled out (by the aid of the symmetry w.r.t. $\ell$ and using he double periodicity of the supposed flow). 
So it remains to analyze the second possibility. (Note that only in the case where $r=2$, also the second diagonal of $P$ yields an axis of mirror symmetry).\\

\begin{figure}[h]
\begin{center}
\includegraphics[scale=0.5]{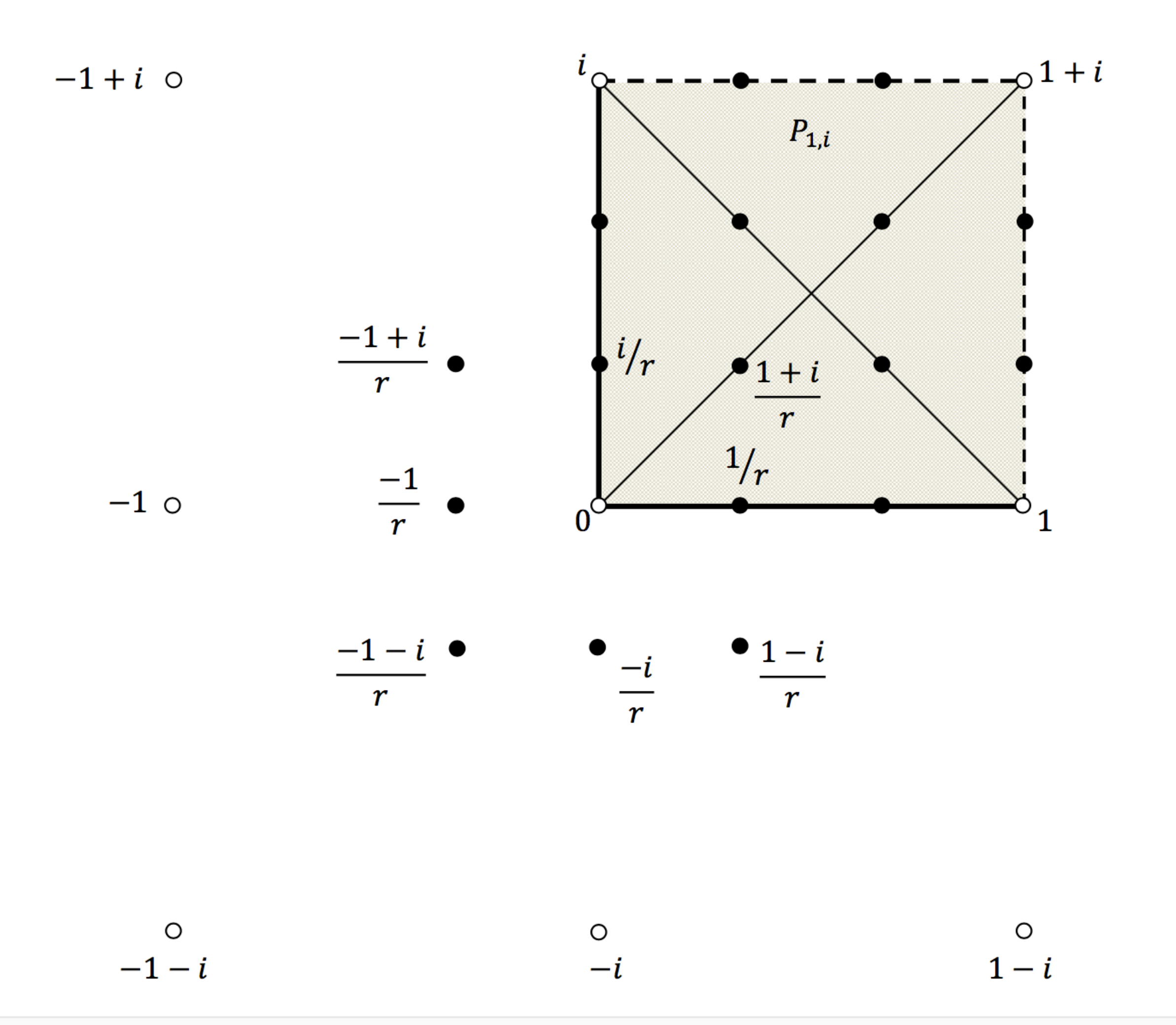}
\caption{\label{IVFigure9} The two classes of pairs $(a,b)=(0, \bullet)$ on $P_{1,i}$.} 
\end{center}
\end{figure}

We focus on Fig.\ref{IVFigure11}, where the only relevant configuration determining a (planar) flow $\overline{\overline{\mathcal{N}} }(f)$, is depicted. By symmetry, the $\ell$-segments between 0 and $\frac{1+i}{r}$, and between $\frac{1+i}{r}$ and $(1+i)$ are $\overline{\overline{\mathcal{N}} }(f)$-trajectories connecting the pole $\frac{1+i}{r}$, with the zeros $0$ and $(1+i)$. Since on the $\overline{\overline{\mathcal{N}} }(f)$-trajectories the arg($f$) values are constant, we may arrange the argument function on $\mathbb{C}$ such that on the segment between $\frac{1+i}{r}$ and $1+i$ we have arg($f$)$=0$. We put arg$f$($\sigma_{1}$)$=\alpha$, thus $0<\alpha<1$ and arg $f$($\sigma_{2}$)$=-\alpha$. Note that at the zero / pole for $f$, each value of arg($f$) appears $r$ times on equally distributed incoming (outgoing) $\overline{\overline{\mathcal{N}} }(f)$-trajectories. By the aid of this observation, together with the symmetry and periodicity of $f$, we find out that the phase portrait of $\overline{\overline{\mathcal{N}} }(f)$ is as depicted in Fig.\ref{IVFigure11}, where box stands for the (constant values of arg($f$) on the unstable manifolds of $\overline{\overline{\mathcal{N}} }(f)$. In particular, there are no saddle connections.\\

\begin{figure}[h]
\begin{center}
\includegraphics[scale=0.3]{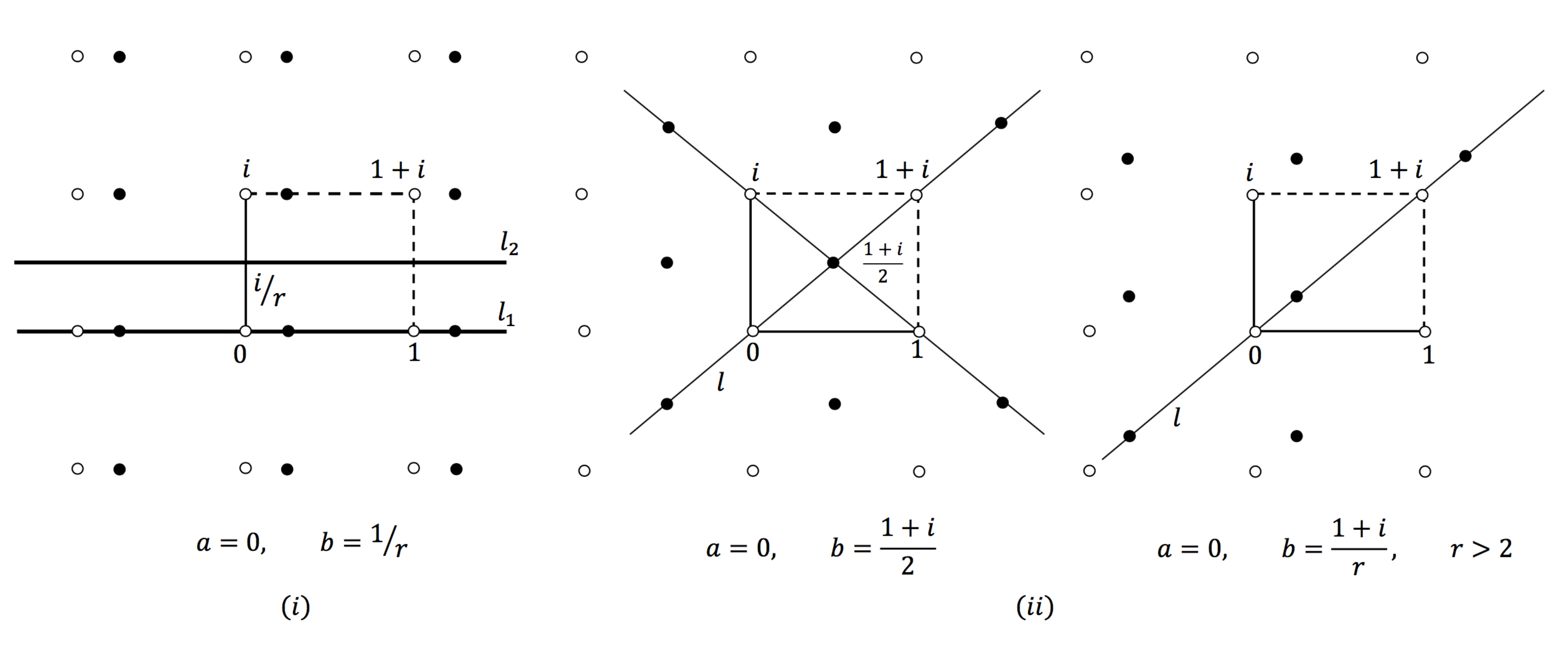}
\caption{\label{IVFigure10} Axes of window symmetry for the configuration $(a,b) (=(0, \bullet))$.}
\end{center}
\end{figure}

\begin{figure}[h]
\begin{center}
\includegraphics[scale=0.4]{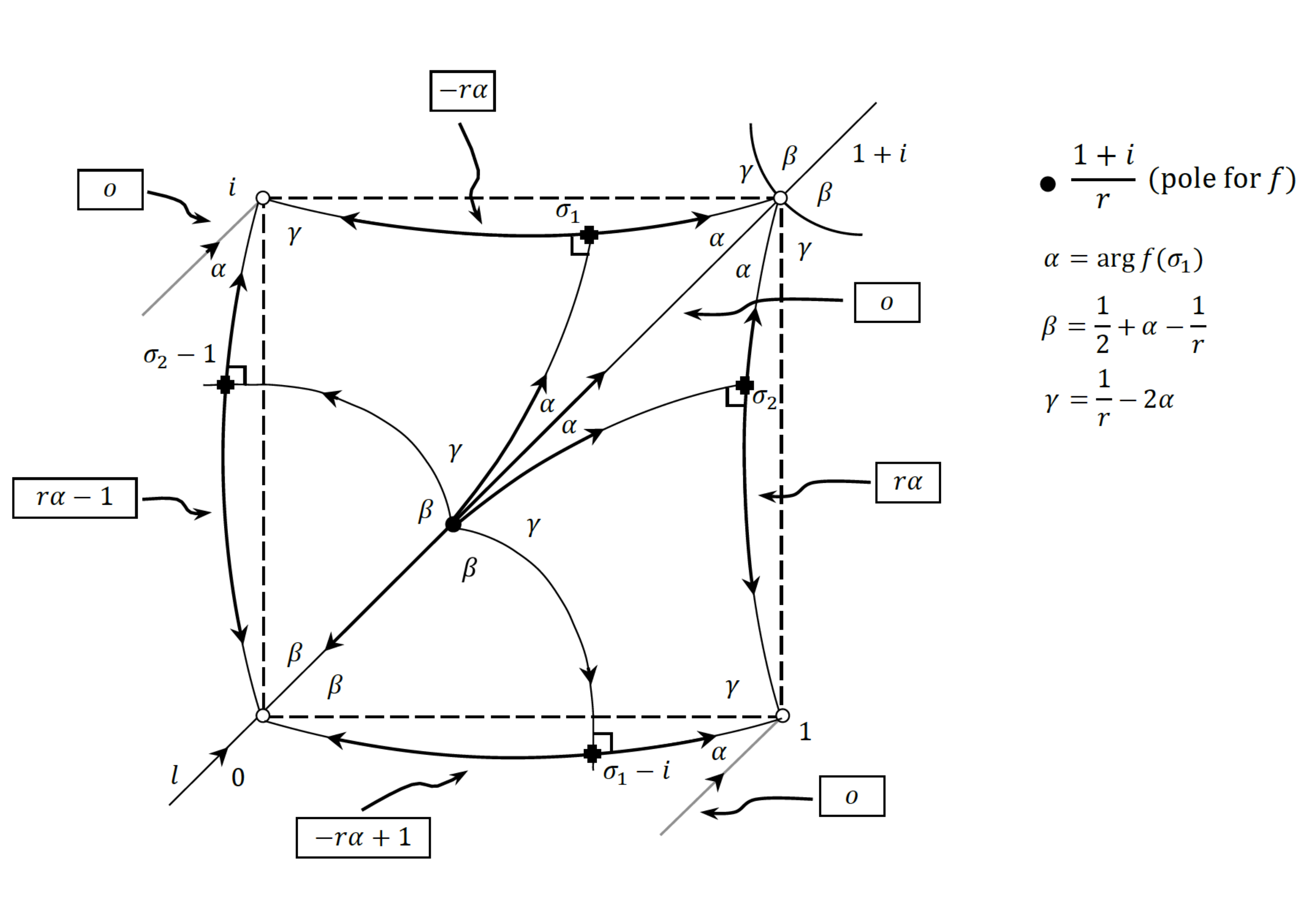}
\caption{\label{IVFigure11}The phase portrait of a nuclear Newton flow of order $r>2$.}
\end{center}
\end{figure}

\begin{remark}
\label{R3.1}
{\it Canonical form of the phase portrait of a nuclear Newton flow}\\
The qualitative features of the phase portrait of $\overline{\overline{\mathcal{N}} }(f)$ in Fig.\ref{IVFigure11} rely on the values of $r$ and $\alpha$.
Put $\beta=\frac{1}{2}+\alpha -\frac{1}{r}$ and $\gamma=\frac{1}{r}- 2 \alpha$. Since all angles $\alpha, \beta \text{ and }\gamma$ are strictly positive, we have $0 < \alpha < \frac{1}{2r}$.
Note that if $r=2$, by symmetry w.r.t. both the diagonals of $P$, we have $\alpha=\beta=\frac{1}{8};\gamma=\frac{1}{4}$
, see Fig.\ref{IVFigure12}; in this case $\overline{\overline{\mathcal{N}} }(f)$ is just $\overline{\overline{\mathcal{N}} }(\wp)$, with $\wp$ the Weierstrass'  $\wp$-function (lemniscate case, cf. \cite{A/S}).
\end{remark}

\begin{figure}[h]
\begin{center}
\includegraphics[scale=0.3]{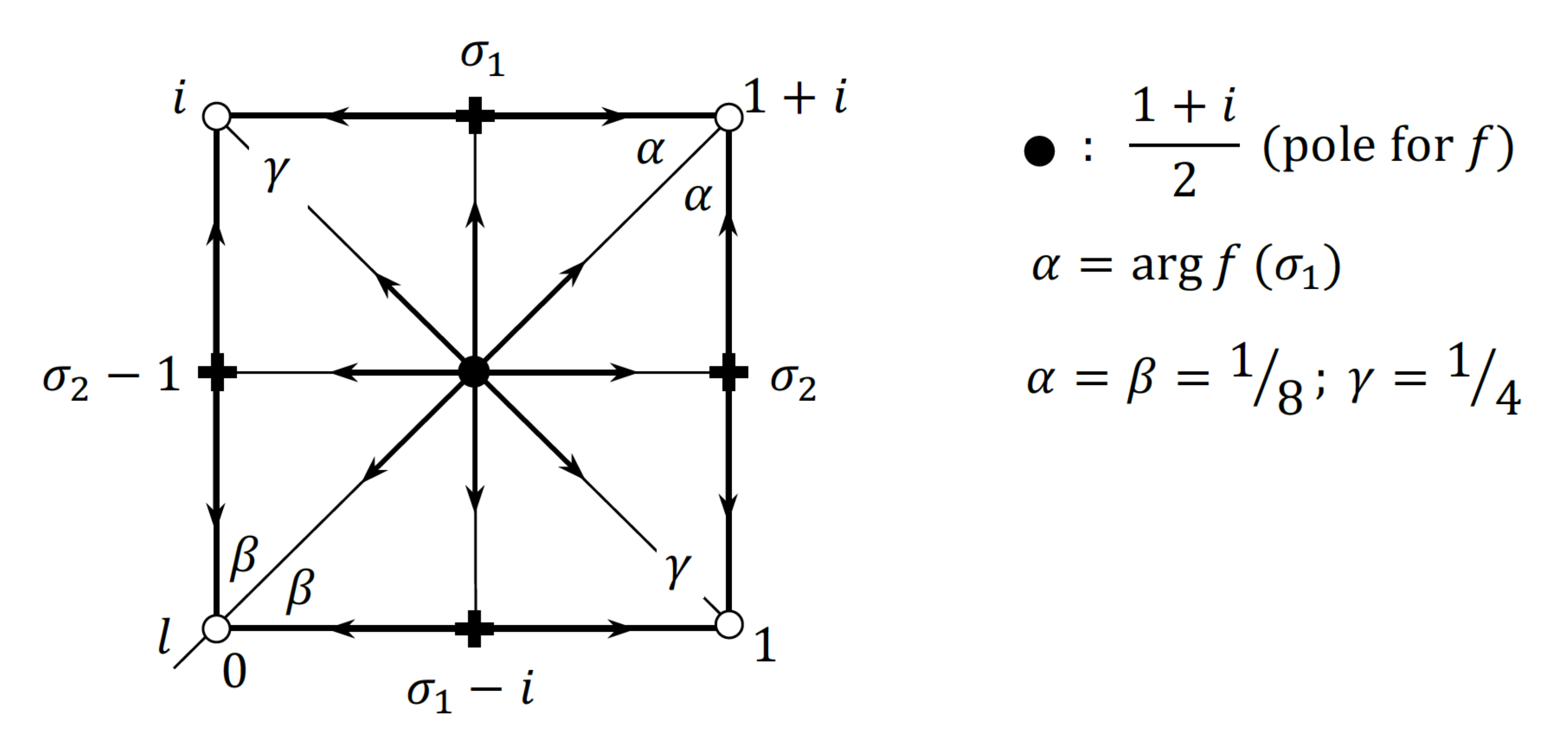}
\caption{\label{IVFigure12} The canonical nuclear Newton flow of order $r=2$.}
\end{center}
\end{figure}

Altogether we conclude:
\begin{lemma}
\label{L9.1}
All nuclear elliptic Newton flows of the same order $r$ are mutually conjugate.
\end{lemma}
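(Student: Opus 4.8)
The plan is to assemble the case analysis of this section into a single conjugacy statement. First I would use Subsubsection~1.1.3 to reduce, up to conjugacy, to the lattice $\Lambda_{1,i}$ and to a representing pair $(a,b)=(0,b)$ with $b$ one of the eight values of Fig.~\ref{Figure25}. These split into Class~1 and Class~2 (Fig.~\ref{IVFigure9}), and inside each class the four configurations are interchanged by a unimodular change of $(1,i)$, hence give conjugate flows; since a Class~1 configuration (represented by $(0,\tfrac1r)$) carries no nuclear Newton flow, every nuclear flow of order $r$ is conjugate to one represented by $(0,\tfrac{1+i}{r})$. For the latter the mirror symmetry about $\ell$ rules out saddles on the diagonal through $\tfrac{1+i}{r}$, leaving exactly the phase portrait of Fig.~\ref{IVFigure11}, which by Remark~\ref{R3.1} depends only on $r$ and on the angle $\alpha$ subject to $0<\alpha<\tfrac1{2r}$. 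For $r=2$ the extra diagonal symmetry forces $\alpha=\tfrac18$ (Fig.~\ref{IVFigure12}), so the lemma is immediate there; the remaining task is to show that for $r>2$ the conjugacy class of Fig.~\ref{IVFigure11} does not depend on the choice of $\alpha\in(0,\tfrac1{2r})$.

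To this end I would first check that the portrait is combinatorially rigid on this parameter interval: it always consists of one repellor and one attractor (the pole and the zero, each of order $r$), exactly two simple orthogonal saddles, and no saddle connections --- the strict positivity of $\alpha$, $\beta$ and $\gamma$ being precisely the open condition that keeps the stable and unstable separatrices disjoint. Hence, on the whole interval, the \emph{separatrix configuration} (the two saddles together with their four stable and four unstable separatrices, and the attractor and repellor as their common limit points) has the same cyclic order of incoming and outgoing separatrices around the attractor and around the repellor; this cyclic order is read off from the constant $\arg f$-values attached to the separatrices in Fig.~\ref{IVFigure11} and does not change as $\alpha$ moves in the open interval. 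Thus the separatrix configurations for two parameters $\alpha,\alpha'$ are equivalent as cellularly embedded oriented graphs in $T$, and they cut $T$ into the same canonical regions, matched up boundary to boundary.

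The conjugacy is then constructed region by region. Choose an orientation-preserving homeomorphism of the two separatrix configurations sending attractor to attractor, repellor to repellor, saddle to saddle, and each separatrix onto the corresponding one with matching orientation. On each canonical region the flow has no equilibria in the interior and no closed orbits (on a Newton trajectory $|f|$ is strictly decreasing), so by Poincar\'e--Bendixson theory it is conjugate to a standard parallel strip flow, and the same holds for the corresponding region of the $\alpha'$-flow; the prescribed boundary homeomorphism therefore extends to a flow-conjugacy of the two regions. These extensions agree on the common separatrices, so they glue to a homeomorphism $h\colon T\to T$ taking trajectories of the $\alpha$-flow onto those of the $\alpha'$-flow and preserving orientation. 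Together with the first paragraph this proves that all nuclear elliptic Newton flows of order $r$ are mutually conjugate. I expect the real work to sit in the second paragraph --- verifying that Fig.~\ref{IVFigure11} genuinely describes the flow for \emph{every} admissible $\alpha$, with no hidden saddle connection or reordering of separatrices --- after which the region-by-region gluing is routine.
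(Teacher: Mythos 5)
Your first paragraph reproduces the paper's argument exactly: pass to $\Lambda_{1,i}$, enumerate the eight candidate configurations via the winding numbers, discard Class~1 and the on-diagonal saddle position by the mirror symmetries, and land on the single configuration $(0,\tfrac{1+i}{r})$ with phase portrait as in Fig.~\ref{IVFigure11}. Where you genuinely diverge is the final step. The paper gets the lemma at that point essentially for free: by Subsubsection~1.1.1 an elliptic function is determined up to a multiplicative constant by its zero/pole configuration, and $-f/f'$ does not see that constant, so for the canonical configuration there is literally \emph{one} flow --- the $\alpha$ of Remark~\ref{R3.1} is not a free parameter but a number determined by $r$ (computed explicitly only for $r=2$, where the extra symmetry gives $\alpha=\tfrac{1}{8}$). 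You instead treat $\alpha$ as variable on $(0,\tfrac{1}{2r})$ and construct a conjugacy between the $\alpha$- and $\alpha'$-portraits by matching separatrix configurations and gluing parallel-flow conjugacies over canonical regions. That argument is sound --- it is the Peixoto-style scheme the paper itself deploys, via the distinguished graph $\mathcal{H}_r\wedge\mathcal{H}_r^*$ and the regularization of Lemma~\ref{L9.5}, to prove the stronger Corollary~\ref{C3.5} that nuclear flows of \emph{different} orders are conjugate --- but for Lemma~\ref{L9.1} it does work the paper avoids; in exchange it does not rely on the uniqueness of $f$ with prescribed divisor. One small caveat in your gluing step: the attractor and repellor are degenerate $r$-fold star nodes, not hyperbolic, so you should either remark that a star node is locally topologically a node, or first pass to the damped flow of Lemma~\ref{L9.5} as the paper does before invoking the canonical-region classification.
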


For $f$ an elliptic function of order $r$ with - on $T$ - only one zero and pole we now define:
\begin{definition}
\label{D3.3}
$\mathcal{H}_{r}(f)$ is the graph on $T$ with as vertex, edges and face respectively:\\
- the zero for $f$ on $T$ (as atractor for $\overline{\overline{\mathcal{N}} }(f)$);\\
- the unstable manifolds for $\overline{\overline{\mathcal{N}} }(f)$ at the two critical points for $f$;\\
- the basin of repulsion for $\overline{\overline{\mathcal{N}} }(f)$ of a pole for $f$ on $T$ (as a repellor for $\overline{\overline{\mathcal{N}} }(f)$).
\end{definition}
From Fig.11  it is evident that $\mathcal{H}_{r}(f)$ is a cellularly embedded pseudo graph (loops and multiple edges permitted). This graph is referred to as to the {\it nuclear Newton graph} for $\overline{\overline{\mathcal{N}} }(f)$. By Lemma \ref{L9.1},  the graphs $\mathcal{H}_{r}(f)$, are - up to equivalency - unique, and will be denoted by $\mathcal{H}_{r}$ (compare the comment on Definition \ref{ND8.5}).

If $a$, $b$ (both in $P(=P_{1,i})$) are of Class 2 (i.e., the configuration $(a, b)$ determines a nuclear flow with $a$ and $b$ as zero resp. pole of order $r$), we introduce the doubly periodic functions:

\begin{equation}
\label{Nvgl30}
\Psi_{a}(z)=\sqrt{\sum_{\omega \in \Lambda}|z-a-\omega|^{-(4r-4)}}; \;\;\Psi_{b}(z)=\sqrt{\sum_{\omega \in \Lambda}|z-b-\omega|^{-(4r-4)}},
\end{equation}
where the summation takes place over all points in lattice $\Lambda(=\Lambda_{1,i})$. 

We define the planar flow $\underline{\mathcal{N}}(f)$
by:
\begin{equation}
\label{pNf}
\frac{dz}{dt}=-\Psi_{a}(z)\Psi_{b}(z)(1+|f(z)|^{4})^{-1}\overline{f^{'}(z)}f(z)
\end{equation}
\begin{lemma}
\label{L9.5}
The flow $\underline{\mathcal{N}}(f)$
is smooth on $\mathbb{C}$ and exhibits the same phase portrait as $\overline{\mathcal{N}}(f)$, 
but, its attractors (at zeros for $f $)  and its repellors (at the poles for $f$) are all generic, i.e. of the hyperbolic type. 
\end{lemma}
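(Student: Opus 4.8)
The plan is to compare the vector field $\underline{\mathcal{N}}(f)$ defined in (\ref{pNf}) with the desingularized flow $\overline{\mathcal{N}}(f)$ (see Footnote \ref{FTN1}), which by construction is
$\frac{dz}{dt}=-(1+|f(z)|^{4})^{-1}|f'(z)|^{2}\frac{f(z)}{f'(z)} = -(1+|f(z)|^{4})^{-1}\overline{f'(z)}f(z)$ (using $|f'|^2 f/f' = \overline{f'}f$ away from the zeros of $f'$). Thus $\underline{\mathcal{N}}(f)$ differs from $\overline{\mathcal{N}}(f)$ only by multiplication by the scalar factor $\Psi_a(z)\Psi_b(z)$. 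First I would check that this factor is well-defined, smooth and strictly positive on $\mathbb{C}\setminus(\Lambda_a\cup\Lambda_b)$, where $\Lambda_a=a+\Lambda$, $\Lambda_b=b+\Lambda$: the series $\sum_\omega|z-a-\omega|^{-(4r-4)}$ converges locally uniformly off $\Lambda_a$ because $4r-4\geqslant 4>2$ (here one uses $r\geqslant 2$), its sum is a positive real-analytic function, and the square root of a positive real-analytic function is smooth. Being a positive scalar multiple, $\Psi_a\Psi_b\cdot\overline{f'}f$ has exactly the same trajectories with the same orientation as $\overline{\mathcal{N}}(f)$ on $\mathbb{C}\setminus(\Lambda_a\cup\Lambda_b)$, and the equilibria away from $\Lambda_a\cup\Lambda_b$ (the saddles at critical points of $f$) are completely unaffected. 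This already gives the claim about the phase portrait everywhere except possibly at the zeros and poles of $f$.

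Next I would analyze the behaviour near a zero $a$ of $f$ (of order $r$), and dually near a pole $b$. Near $a$ write $f(z)=c(z-a)^r(1+O(z-a))$, so $\overline{f'(z)}f(z)=r|c|^2|z-a|^{2r-2}\overline{(z-a)}(1+o(1))$ and $(1+|f|^4)^{-1}=1+o(1)$. Meanwhile $\Psi_a(z)=|z-a|^{-(2r-2)}(1+o(1))$ since the $\omega=0$ term dominates, while $\Psi_b(z)$ is smooth and positive at $z=a$ (as $a\neq b$, cf. (\ref{vgl9})). Multiplying, the singular factors $|z-a|^{2r-2}$ and $|z-a|^{-(2r-2)}$ cancel exactly, so near $a$ the field $\underline{\mathcal{N}}(f)$ equals $-\kappa\,\overline{(z-a)}(1+o(1))$ for a positive constant $\kappa=r|c|^2\Psi_b(a)$. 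Hence $a$ is a genuine equilibrium of $\underline{\mathcal{N}}(f)$, and in real coordinates $z-a=x+iy$ the linearization is $\frac{d}{dt}(x,y)=-\kappa(x,-y)+o(|(x,y)|)$ — wait, this needs care: $\overline{(z-a)}=x-iy$, so the leading field is $(-\kappa x,\kappa y)$, which would be a saddle, not an attractor. I should instead keep track of the actual desingularization used in \cite{HT1}; the correct leading term (after the damping that makes nodes into star nodes) is $-\kappa(z-a)$ up to a positive factor, giving linearization $-\kappa\,\mathrm{Id}$, a hyperbolic attractor. Dually at the pole $b$ one gets $+\kappa'\,\mathrm{Id}$, a hyperbolic repellor. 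The point to verify carefully is exactly this leading-order computation, i.e. that $\Psi_a$ is chosen with precisely the exponent $4r-4$ that cancels the $|z-a|^{2r-2}$ from $\overline{f'}f$ so that a nonzero, nondegenerate linear term survives.

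Finally I would note periodicity and smoothness globally: $\Psi_a,\Psi_b$ are $\Lambda$-periodic by construction (the sum is over all of $\Lambda$), and $f$ is $\Lambda$-periodic, so $\underline{\mathcal{N}}(f)$ descends to a vector field on $T$; away from $\Lambda_a\cup\Lambda_b$ it is smooth because $f$ is meromorphic there and the factor is smooth positive, and at the points of $\Lambda_a$, $\Lambda_b$ the cancellation just described shows the field extends smoothly with a nondegenerate zero. Therefore $\underline{\mathcal{N}}(f)$ is smooth on $\mathbb{C}$ (indeed on $T$), is trajectory-equivalent to $\overline{\mathcal{N}}(f)$, and its nodes at zeros and poles of $f$ are hyperbolic. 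The main obstacle I anticipate is the bookkeeping in the previous paragraph: getting the local model at a multiple zero exactly right, in particular confirming that the desingularizing damping factor $(1+|f|^4)^{-1}$ together with $\Psi_a$ conspires to produce a linearization proportional to $-\mathrm{Id}$ (not a degenerate or indefinite one), which is what "generic, of the hyperbolic type" requires.
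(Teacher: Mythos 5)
Your proposal is correct and follows essentially the same route as the paper's proof: locally uniform convergence of the series (using $4r-4\geqslant 4$) gives smoothness and strict positivity of $\Psi_{a}\Psi_{b}$ off $a+\Lambda$ and $b+\Lambda$, a strictly positive scalar factor leaves the phase portrait unchanged there, and a local analysis at the (order $r$) zeros and poles yields smoothness together with hyperbolicity; the paper itself only sketches that last step as a ``careful but straightforward analysis''. The one loose end you flagged resolves by redoing the algebra rather than by appealing to any further feature of the desingularization: with $f(z)=c(z-a)^{r}(1+O(z-a))$ one has
$\overline{f'(z)}\,f(z)=r|c|^{2}\,\overline{(z-a)}^{\,r-1}(z-a)^{r}(1+o(1))=r|c|^{2}|z-a|^{2r-2}(z-a)(1+o(1))$,
so the leading factor is $(z-a)$, not $\overline{(z-a)}$; multiplying by $\Psi_{a}(z)\sim |z-a|^{-(2r-2)}$ and by $\Psi_{b}(a)>0$ gives the linearization $-r|c|^{2}\Psi_{b}(a)\,\Id$, a hyperbolic (star-node) attractor. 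Dually, at a pole $b$ of order $r$ the damping $(1+|f|^{4})^{-1}\sim |c|^{-4}|z-b|^{4r}$ combines with $\overline{f'}f\sim -r|c|^{2}|z-b|^{-2r-2}(z-b)$ and $\Psi_{b}(z)\sim |z-b|^{-(2r-2)}$ to give $+r|c|^{-2}\Psi_{a}(b)\,\Id$, a hyperbolic repellor; this confirms that the exponent $4r-4$ is exactly the right choice at both types of equilibria.
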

\begin{proof}
Since $r\geqslant 2$ (thus $4r-4 \geqslant 4$), series of the type as under the square root in (\ref{Nvgl30}) are uniform convergent in each compact subset of 
$
\mathbb{C} \backslash (a+ \Lambda \cup b+\Lambda ).
$
From this, together with the smoothness of $\overline{\mathcal{N}}(f)$ on $\mathbb{C}$, it follows that $\underline{\mathcal{N}}(f)$
is smooth outside the union of $a+ \Lambda$ and $b+ \Lambda$.
Special attention should be paid to the lattice points. Here the smoothness of  $\underline{\mathcal{N}}(f)$ as well as the genericity of its attractors and repellors
follows by a careful (but straightforward) analysis of the local behaviour of $\underline{\mathcal{N}}(f)$ around these points; compare the explicit expression for  $\overline{\mathcal{N}}(f)$ in Footnote 1 and note that zeros and poles are of order $r$.
Since outside their equilibria $\underline{\mathcal{N}}(f)$
and $\overline{\mathcal{N}}(f)$ are equal -up to a strictly positive factor- their portraits coincide.
\end{proof}

\begin{corollary}
\label{C3.5}
All nuclear Newton flows of arbitrary order are mutually conjugate.
\end{corollary}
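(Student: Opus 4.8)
The plan is to move from the non-generic flow $\overline{\overline{\mathcal{N}}}(f)$ to the structurally stable flow $\underline{\mathcal{N}}(f)$ supplied by Lemma \ref{L9.5}, and then to compare flows of different orders via a Peixoto-type classification, exploiting that the nuclear Newton graph has the same (order-independent) combinatorial type for every $r$.

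First I would fix nuclear functions $f$ of order $r$ and $g$ of order $r'$, with $r,r'\geqslant 2$, both viewed on $T=T(\Lambda_{1,i})$ after the normalisations of Subsubsection 1.1.3; by Lemma \ref{L9.1} it is enough to deal with one representative of each order, e.g. the canonical one of Remark \ref{R3.1}. Since the functions $\Psi_a,\Psi_b$ in (\ref{Nvgl30}) are $\Lambda$-periodic and $f$ is elliptic, the planar field $\underline{\mathcal{N}}(f)$ of (\ref{pNf}) descends to a ($C^{1}$, in fact smooth) vector field $\underline{\overline{\overline{\mathcal{N}}}}(f)$ on $T$, and likewise for $g$. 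By Lemma \ref{L9.5}, $\underline{\overline{\overline{\mathcal{N}}}}(f)$ has exactly one hyperbolic attractor, one hyperbolic repellor and two simple orthogonal saddles; being a strictly positive reparametrisation of $\overline{\overline{\mathcal{N}}}(f)$, along whose trajectories $|f|$ strictly decreases, it has no closed orbits, and by the analysis of Fig. \ref{IVFigure11} it has no saddle connections. Hence $\underline{\overline{\overline{\mathcal{N}}}}(f)$ is Morse--Smale, i.e. $C^{1}$-structurally stable (Baggis--Peixoto), and trajectory-by-trajectory its phase portrait coincides with that of the nuclear Newton flow $\overline{\overline{\mathcal{N}}}(f)$; the same holds for $g$.

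Next I would identify the separatrix configuration of $\underline{\overline{\overline{\mathcal{N}}}}(f)$: its attractor together with the two unstable manifolds of its saddles is precisely the nuclear Newton graph $\mathcal{H}_r$ of Definition \ref{D3.3}, and dually the repellor together with the stable manifolds is $\mathcal{H}_r^{*}$; their superposition is the cellular ``distinguished'' decomposition of $T$ into canonical regions on which Peixoto's classification of structurally stable surface flows up to conjugacy is based (cf. \cite{Peix1}, \cite{HT2}). The decisive point is then that $\mathcal{H}_r$ has a single vertex, two loop-edges and one face, hence a trivial rotation system, so — as remarked after Definition \ref{ND8.5} — it is equivalent to $\hat{\mathcal{G}}_1$ for \emph{every} $r\geqslant 2$; consequently $\mathcal{H}_r^{*}$ and the superposition $\mathcal{H}_r\wedge\mathcal{H}_r^{*}$, with their canonical regions, are equivalent independently of $r$. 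Thus the distinguished graphs of $\underline{\overline{\overline{\mathcal{N}}}}(f)$ and $\underline{\overline{\overline{\mathcal{N}}}}(g)$ are isomorphic, Peixoto's theorem gives $\underline{\overline{\overline{\mathcal{N}}}}(f)\sim\underline{\overline{\overline{\mathcal{N}}}}(g)$, and transporting this conjugacy back through the ``same phase portrait'' statement of Lemma \ref{L9.5} yields $\overline{\overline{\mathcal{N}}}(f)\sim\overline{\overline{\mathcal{N}}}(g)$; together with Lemma \ref{L9.1} this proves the corollary.

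The main obstacle is the middle stage: checking carefully that $\underline{\overline{\overline{\mathcal{N}}}}(f)$ is genuinely $C^{1}$-structurally stable (Lemma \ref{L9.5} hands us hyperbolicity of the nodes, but one must still exclude closed orbits and saddle connections and invoke Morse--Smale on $T$) and, above all, that its Peixoto distinguished graph is exactly $\mathcal{H}_r\wedge\mathcal{H}_r^{*}$. One cannot simply quote the abstract construction of Subsubsection 1.2.3 here, since $\mathcal{H}_r$ fails the $E$-property; instead the superposition of stable and unstable separatrices must be read off directly from the phase portrait in Fig. \ref{IVFigure11}. Once that identification is in place, the order-independence of $\hat{\mathcal{G}}_1$ makes the conjugacy across orders immediate.
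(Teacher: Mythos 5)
Your proposal is correct and follows essentially the same route as the paper: pass to the desingularized flow $\underline{\mathcal{N}}(f)$ of Lemma \ref{L9.5}, observe it is $C^{1}$-structurally stable, identify its distinguished graph as $\mathcal{H}_{r}\wedge\mathcal{H}_{r}^{*}$ (which, having one vertex, two loops and one face, is independent of $r$), and conclude by Peixoto's classification theorem. The extra care you take in verifying the Morse--Smale conditions and in reading the canonical regions off Fig.~\ref{IVFigure11} rather than from Subsubsection 1.2.3 only makes explicit what the paper leaves implicit.
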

\begin{proof}
Let $\underline{N}(f)$ be arbitrary. Because all its equilibria are generic and there are no saddle connections, this flow is $C^{1}$-structurally stable. The embedded graph $\mathcal{H}_{r}(=\mathcal{H}_{r}(f))$, together with its geometrical dual $\mathcal{H}_{r}(f)^{*}$, forms the so-called distinguished graph that determines - up to an orientation preserving homeomorphism - the phase portrait of $\underline{N}(f)$ (cf. \cite{Peix1}, \cite{Peix2} and Subsubsection 1.2.3). This distinghuished graph is extremely simple, giving rise to only four distinghuished sets (see Fig.\ref{IVFigure11}). This holds for any flow of the type $\underline{N}(f)$. Now, application of Peixoto's classification theorem for $C^{1}$-structurally stable flows on $T$ yields the assertion.
\end{proof}

We end up with a comment on the nuclear Newton graph $\mathcal{H}(f_{\omega_{1}, \omega_{2}})$, ${\rm Im}\frac{\omega_{2}}{\omega_{1}} >0,$ where $(\omega_{1}, \omega_{2}), $ is related to the period pair $(1, i)$ by the unimodular transformation
$$
M=\left(
\begin{matrix}
p_{1}&q_{1} \\
p_{2}&q_{2}
\end{matrix}
\right), \;\; p_{1}q_{2}-p_{2}q_{1}=+
1.
$$
Thus $(p_{1}, p_{2})$ and $(q_{1}, q_{2})$ are co-prime, and
$$
\omega_{1}=p_{1}+p_{2}i, \;\; \omega_{2}=q_{1}+q_{2}i.
$$
Our aim is to describe $\mathcal{H}(f_{\omega_{1}, \omega_{2}})$ as a graph on the canonical torus $T(=T_{1,i})$. 
In view of Lemma \ref{L9.1}, the two edges of $\mathcal{H}(f_{\omega_{1}, \omega_{2}})$ are closed Jordan curves on $T$ , corresponding to the unstable manifolds of $\overline{\mathcal{N}}(f_{\omega_{1},\omega_{2}})$) at the two critical points for $f$ that are situated in the period parallelogram $P_{\omega_{1}, \omega_{2}}$. These unstable manifolds connect $a(=0)$ with $p_{1}+p_{2}i$, and $q_{1}+q_{2}i $ respectively. Hence, one of the $\mathcal{H}(f_{\omega_{1}, \omega_{2}}) $-edges wraps $p_{1}$-times around $T$ in the 
direction of the period 1 and $p_{2}$-times around $T $ in the 
direction of the period $i$, whereas the other edge wraps $q_{1}$-times around this torus in the 
$1$-direction respectively $q_{2}$-times in the 
$i$-direction. See also Fig.\ref{NFig49}, where we have chosen for $f$ the Weierstrass $\wp$-function (lemniscate case), i.e. $r=2$, $a=0$ and $\omega_{1}=3+i, \omega_{2}=2+i$. Compare also Fig.\ref{IVFigure12}, case $r=2$.
  
\begin{figure}[h!]
\begin{center}
\includegraphics[scale=0.7]{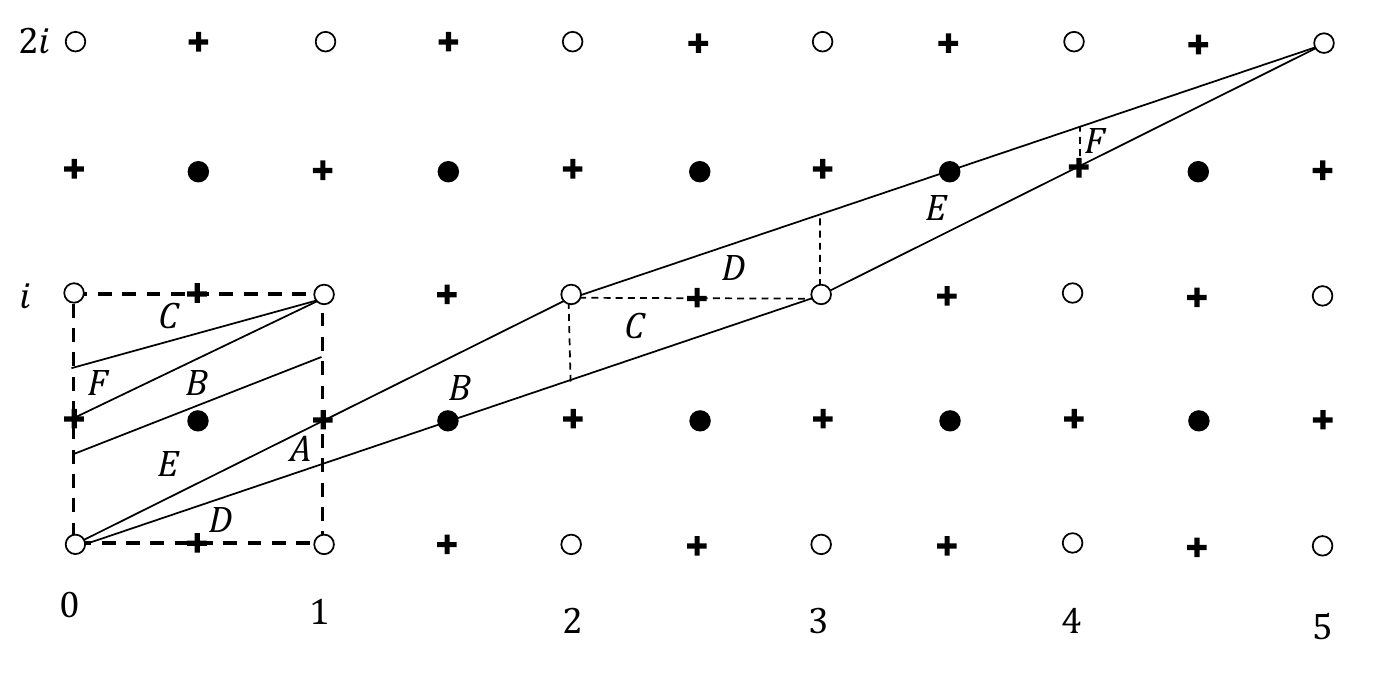}
\caption{\label{NFig49}The nuclear Newton graph $\mathcal{H}(\wp_{3+i,2+i})$ on the torus $T(=T_{1,i}).$} 
\end{center}
\end{figure}

\section{The bifurcation \& creation of elliptic Newton flows}

In this section we discuss the connection between pseudo Newton graphs and Newton flows.
In order not to blow up the size of our study, we focus - after a brief introduction - on the cases $r=2, 3$. However, even from these simplest cases we get some flavor of what we may expect when dealing with a more general approach.

\begin{figure}
\begin{center}
\includegraphics[width=5.5in]{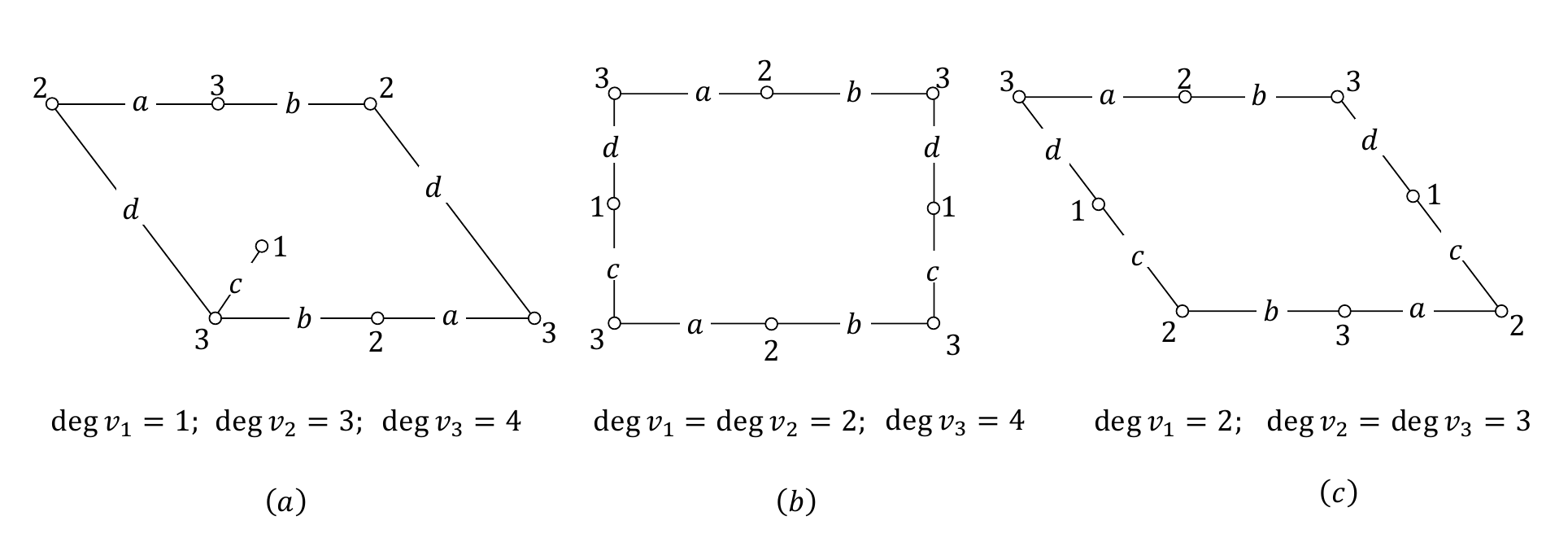}
\caption{\label{Figure29} The three different pseudo Newton graphs $\check{\mathcal{G}}_{3}$,
$\hat{\mathcal{G}}_{3}$.}
\end{center}
\end{figure} 

We consider functions $g \in E_{r}$ with $r$ simple zeros and only one pole (of order $r$); such functions exist,  compare Subsubsection 1.1.1. The set of all these functions is denoted by $ E_{r}^{1}$  and will be endowed with the relative topology induced by the topology $\tau_{0}$ on $E_{r}$. Since the derivative $g'$ of $g$ is elliptic of order $r+1$, the zeros for $g$ being simple, there are  $r+1$ critical points for $g$ (counted by multiplicity). 

We consider the set $N^{1}_{r}$ of all toroidal Newton flows $\overline{\overline{\mathcal{N}} }(g)$. Such a flow is $C^{1}$-structurally stable (thus also $\tau_{0}$-structurally stable) if and only if: (cf. subsection 1.1.4 and \cite{Peix1},  \cite{Peix2})
\begin{enumerate}
\item[1.] All saddles are simple (thus generic). 
\item[2.] There are no ``saddle connections''.
\item[3.] The repellor at the pole for $g$ is generic.
\end{enumerate}
In general none of these conditions is fulfilled. We overcome this complication as follows:

\noindent
{\bf ad 1.} Under suitably chosen - but arbitrarily small - perturbations of the zeros and poles of $g$, thereby preserving their multiplicities, $\overline{\overline{\mathcal{N}} }(g)$ turns into a Newton flow with only simple (thus $r+1$) saddles (cf. \cite{HT1}, Lemma 5.7, case  $A=r, B=1$).\\
{\bf ad 2.}  Possible saddle connections can be broken by Òadding to $g$ a  suitably chosen, but arbitrarily small constantÓ (cf.\cite{HT1}, proof of Theorem 5.6 (2)).\\
{\bf ad 3.}  With the aid of a suitably chosen additional damping factor to $\overline{\overline{\mathcal{N}} }(g)$, the pole of $g$ may be viewed to as generic for the resulting flow; compare the proof of Lemma 3.4. (Note that the simple zeros for $g$ yield already generic equilibriae).\\

This opens the possibility to adapt $g$ and $\overline{\overline{\mathcal{N}} }(g)$ in such a way that for Òalmost allÓ functions $g$ the flow $\overline{\overline{\mathcal{N}} }(g)$ is structurally stable (see Subsubsection 1.1.4, and Theorem 5.6 in  \cite{HT1}). More formally:\\

\noindent
The set $\underline{E}^{1}_{r}$ of functions $g$ in $ E_{r}^{1}$, with $\overline{\overline{\mathcal{N}} }(g)$ structurally stable, is $\tau_{0}$-open and -dense in $ E_{r}^{1}$.\\

From now on, we assume that $\overline{\overline{\mathcal{N}} }(g)$ is structurally stable and define the multi graph $\mathcal{G}_{r}(g)$ on $T$ as follows:\\
-  Vertices: $r$ zeros for $g$ (i.e., stable star nodes for $\overline{\overline{\mathcal{N}} }(g)$).\\
- Edges: $r+1$  unstable manifolds at the critical points for $g$ (orthogonal saddles for $\overline{\overline{\mathcal{N}} }(g)$).\\
- Face: the basin of repulsion of the unstable star node at the pole for $g$.\\
Note that $\mathcal{G}_{r}(g)$ has no loops (since the zeros for $g$ are simple).\\

It is easily seen that $\mathcal{G}_{r}(g)$ is cellularly embedded (cf. \cite{HT2}, proof of Lemma 2.9).\\

Because $\mathcal{G}_{r}(g)$ has only one face, the geometrical dual $\mathcal{G}_{r}(g)^{*}$ admits merely loops and the $\Pi$-walk for the $\mathcal{G}_{r}(g)$-face consists of $2(r+1)$ edges, each occurring twice, be it with opposite orientation; here the orientation on the $\Pi$-walk is induced by the anti-clockwise orientation on the embedded $\mathcal{G}_{r}(g)^{*}$-edges at the pole for $g$.
In the case where $\mathcal{G}_{r}(g)$ admits a vertex of degree 1, we delete this vertex together with the adjacent edge, resulting into a cellularly embedded graph on $r\!-\!1$ vertices, $r$ edges and only one face. If this graph has a vertex of degree 1, we repeat the procedure, and so on. The process stops after $L \, ( <\! r\!-\!1)$ steps, resulting into a connected, cellularly embedded muligraph of the type 
$\hat{\mathcal{G}}_{\rho},  \rho=r-L, 2 \leqslant \rho \leqslant r$.

Now, we raise the question whether the graphs obtained in this way are indeed pseudo Newtonian, i.e., do they originate from a Newton graph? And even so, can all pseudo Newton graphs be represented by elliptic Newton flows?

In the sequel we give an (affirmative) answer to these questions only in the cases $r=2$ and $r=3$.

\begin{lemma}
\label{L10.1}
If $r=2$ or $3$, then the graph $\mathcal{G}_{r}(g)$, $g \in \underline{E}^{1}_{r}$, is a pseudo Newton graph $\check{\mathcal{G}}_{r}$ .
\end{lemma}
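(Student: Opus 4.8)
The plan is to show that $\mathcal{G}_{r}(g)$ carries exactly the combinatorial data of a graph $\check{\mathcal{G}}_{r}$ and then, for $r=2,3$, to exhibit an order-$r$ Newton graph from which it is produced by the edge-deletion procedure of Section~2; by Definition~\ref{D2.1} this is precisely what ``pseudo Newton graph of order $r$'' means. First I would record the properties forced on $\mathcal{G}_{r}(g)$ by the structural stability of $\overline{\overline{\mathcal{N}}}(g)$: it is connected and cellularly embedded in $T$, it has $r$ vertices, $r+1$ edges and a \emph{single} face (the basin of repulsion of the unique pole), it has no loops, and $\sum_{v}\deg(v)=2(r+1)$. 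Since there is only one face, all anti-clockwise angles at an attractor lie in that face and add up to $2\pi$, so the \emph{A-property} holds automatically (the Hall-type obstruction of Subsubsection~1.2.3 is vacuous when there is one face), whereas the \emph{E-property} fails (the dual has $r+1$ loops). Thus $\mathcal{G}_{r}(g)$ is of exactly the type of the graphs $\check{\mathcal{G}}_{r}$ constructed in Section~2, and the task reduces to re-inserting $r-1$ edges inside the single face so that the result has $2r$ edges, $r$ faces, and satisfies both the \emph{A-} and the \emph{E-property} (equivalently, so that these reinsertions invert legitimate face-splitting edge-deletions).

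For $r=2$ the statement is immediate. Each edge of $\mathcal{G}_{2}(g)$ is an unstable manifold joining two of the two zeros of $g$, and loop-freeness forces all three edges to join the two distinct zeros; hence $\mathcal{G}_{2}(g)$ is the $3$-edge theta graph with degree sequence $(3,3)$. A direct face-tracing check shows that the torus supports, up to orientation-preserving isomorphism, only one cellular embedding of this graph with a single face, so $\mathcal{G}_{2}(g)\sim\hat{\mathcal{G}}_{2}=\check{\mathcal{G}}_{2}$ by Corollary~\ref{NC8.2} (using $L=0$ when $r=2$); and $\check{\mathcal{G}}_{2}$ arises by deleting one edge from the (unique, up to equivalence) order-$2$ Newton graph of Subsection~1.3 (Fig.~\ref{Figure9.12}). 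Therefore $\mathcal{G}_{2}(g)$ is a pseudo Newton graph.

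For $r=3$ I would run a finite enumeration. The loopless connected multigraphs on three vertices with four edges reduce to a short list of underlying shapes --- a triangle with one doubled side (degrees $(2,3,3)$), two doubled edges sharing a vertex (degrees $(2,4,2)$), and a tripled edge carrying a pendant (degrees $(1,3,4)$) --- and for each I would list, up to orientation-preserving isomorphism, the cellular embeddings in $T$ having exactly one face; as in the proof of Lemma~\ref{NL8.1} this is a classification of the admissible facial walks (closed walks of length $8$ in which each vertex $v$ occurs $\deg(v)$ times and each edge occurs twice, with opposite orientations). Comparing the outcome with the list of graphs $\check{\mathcal{G}}_{3}$ and $\hat{\mathcal{G}}_{3}$ in Corollary~\ref{NC8.2} and Figures~\ref{Figure20}, \ref{Figure22}, \ref{Figure29}, one verifies that every surviving possibility is obtained by two edge-deletions from one of the order-$3$ Newton graphs of Subsection~1.3 (Fig.~\ref{Figure9.11}), which yields the assertion.

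The crux will be the $r=3$ step, and within it the need to exclude any one-face, loopless, cellularly embedded $3$-vertex $4$-edge graph that does \emph{not} descend from a Newton graph --- such a graph could not occur as a $\mathcal{G}_{3}(g)$, and the argument must rule it out. I would do this constructively: realize the two missing edges as non-crossing chords that cut the $2(r+1)$-gon facial walk into $r$ regions so that each of the $r+1$ original edges ends up on two \emph{distinct} faces (restoring the \emph{E-property}), and then check the \emph{A-property} for the resulting $r$-face graph via the Hall condition of Subsubsection~1.2.3; since such a reinsertion is visibly the inverse of a chain of face-splitting edge-deletions, $\mathcal{G}_{3}(g)$ is by Definition~\ref{D2.1} a $\check{\mathcal{G}}_{3}$. (For general $r$ this survival of the \emph{A-property} under reinsertion is exactly the delicate point the lemma sidesteps; a more conceptual alternative would be to perturb $g$ so as to split its order-$r$ pole into $r$ simple poles, obtaining a nearby non-degenerate flow with a genuine order-$r$ Newton graph $\mathcal{G}(f)$, and to argue that re-coalescing the poles performs precisely the deletion process, so $\mathcal{G}_{r}(g)\sim\check{\mathcal{G}}_{r}$ with $\mathcal{G}_{r}=\mathcal{G}(f)$.)
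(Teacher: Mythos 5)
Your argument follows essentially the same route as the paper's: reduce to the finite classification of one-face, loopless, cellularly embedded toroidal graphs with $r$ vertices and $r+1$ edges (the content of Lemma \ref{NL8.1} and Corollary \ref{NC8.2}, which you re-derive by enumerating degree sequences and facial walks), and then check that each possibility descends from an order-$r$ Newton graph by edge deletion. The one step you leave as a plan --- verifying that the reinserted edges restore the \emph{E-} and \emph{A-}properties --- is exactly the finite check the paper delegates to Corollary \ref{NC8.2} and Figures \ref{Figure20} and \ref{Figure22}, so there is no substantive gap.
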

\begin{proof}
Firstly, note that the proof of Lemma \ref{NL8.1} does not rely on the fact that $\hat{\mathcal{G}}_{\rho}$ originates from Newton graphs, but merely on the cellularity of $\hat{\mathcal{G}}_{\rho}$ in combination with the property that $\# \{\text{edges}\}=1+ \# \{\text{vertices}\}$.\\
\underline{Case $r=2$}: By Corollary \ref{NC8.2} of Lemma \ref{NL8.1} we know:  $\hat{\mathcal{G}}_{2}$ ($=\check{\mathcal{G}}_{2}$) is unique (up to equivalency). So, $\mathcal{G}_{2}(g)$ has the same topological type as $\hat{\mathcal{G}}_{2}$ 
and originates from a Newton graph (compare Fig.\ref{Figure9.12} and Fig.\ref{Figure21}$a_{1}$, where all subwalks $W_{i}$ admit only one edge).\\
\noindent
\underline{Case $r=3$}: If $\mathcal{G}_{3}(g)$ has a vertex $v_{1}$ of degree 1, the graph obtained by deleting $v_{1}$ together with the adjacent edge $c$ is a cellularly embedded graph in $T$ with two vertices and three edges and must $\hat{\mathcal{G}}_{2}$. So $\mathcal{G}_{3}(g)$ is of the form Fig.\ref{Figure29}(a).

If $\mathcal{G}_{3}(g)$ has no vertex  of degree 1, this graph is of type $\check{\mathcal{G}}_{3}$, and thus - by Corollary 2.2 -  either of the form as depicted in Fig.\ref{Figure29} (b), or Fig.\ref{Figure29} (c).

So, we find that $\mathcal{G}_{3}(g)$ takes, a priori, the three possible forms in Fig.\ref{Figure29}, where the values of degree $v_{i}$ discriminate between these possibilities. Recall that these three graphs originate from Newton graphs.
\end{proof}
The reasoning in the above Case $r=3$ does {\it not} imply that each of the graphs in Fig.\ref{Figure29} can be realized by a Newton flow. So we need:

\begin{lemma}
\label{L10.2}
If $r=2$ or $3$, then each pseudo Newton graph of the type $\check{\mathcal{G}}_{r}$ or $\hat{\mathcal{G}}_{r}$ can be represented as $\mathcal{G}_{r}(g)$, $g \in \underline{E}^{1}_{r}$.
\end{lemma}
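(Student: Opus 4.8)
The plan is to exhibit, for each pseudo Newton graph $\mathcal{G}$ of the required type, an explicit configuration of $r$ simple zeros and one pole of order $r$ whose associated structurally stable flow $\overline{\overline{\mathcal{N}}}(g)$ has $\mathcal{G}_{r}(g)\sim\mathcal{G}$. The natural strategy is \emph{reverse engineering from a Newton graph}: every pseudo Newton graph $\check{\mathcal{G}}_{r}$ or $\hat{\mathcal{G}}_{r}$ was obtained, by Definition \ref{D2.1} and the construction of Section 2, by deleting edges (and then degree-one vertices) from some Newton graph $\mathcal{G}_{r}$ of order $r$. By the \textbf{representation} result of \cite{HT2} recalled in Subsubsection 1.2.2, there is a structurally stable elliptic Newton flow $\overline{\overline{\mathcal{N}}}(f_{\mathcal{G}_{r}})$ with $\mathcal{G}(f_{\mathcal{G}_{r}})\sim\mathcal{G}_{r}$; here $f_{\mathcal{G}_{r}}$ has $r$ simple zeros and $r$ simple poles. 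The idea is then to let $r-1$ of the poles coalesce (in a controlled way, tracking how the critical points and their unstable manifolds move) into a single pole of order $r$, so that the $r$ basins of repulsion merge exactly along the chosen sequence of deleted edges, and the limiting flow lies in $\underline{E}^{1}_{r}$.

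First I would, for $r=2$, treat the unique case directly: by Corollary \ref{NC8.2}, $\check{\mathcal{G}}_{2}=\hat{\mathcal{G}}_{2}$ is unique up to equivalency, so it suffices to produce one function $g\in\underline{E}^{1}_{2}$ — indeed the nuclear-type analysis is not available since $g$ has two distinct zeros, but one can take $g$ with zeros $a_{1},a_{2}$ and a double pole $b$ satisfying $a_{1}+a_{2}=2b+\lambda_0$, perturb to make the three critical points simple and break saddle connections (via the adjustments \textbf{ad 1--3} already set up in this section), and verify from the resulting phase portrait that $\mathcal{G}_{2}(g)$ has the topological type of Fig.\ref{Figure21}$a_{1}$ with each $W_{i}$ a single edge. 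For $r=3$ I would handle the three graphs of Fig.\ref{Figure29} one at a time: for each, start from the appropriate Newton graph $\mathcal{G}_{3}$ of Subsubsection 1.2.3 / Fig.\ref{Figure9.11} (there are essentially two, corresponding to cases $(a_1)$ and $(a_2)$ of Lemma \ref{NL8.1}), realize it by $f_{\mathcal{G}_{3}}\in\tilde{N}_{3}$, and then continuously move two of its poles towards the third along a path that, at the level of the basins, performs the two successive face-merges described in Section 2 (choosing at the second step an exterior $\mathcal{G}(\{1,2\})$-vertex via the A-property / Hall condition). One must check that along this degeneration the flow stays in $E_{3}^{1}$ only in the limit and that, after the damping/perturbation adjustments of \textbf{ad 1--3}, the limiting structurally stable flow's graph $\mathcal{G}_{3}(g)$ is equivalent to the prescribed $\check{\mathcal{G}}_{3}$ (resp. $\hat{\mathcal{G}}_{3}$ after deleting a degree-one vertex, cf. Fig.\ref{Figure29}(a)).

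The main obstacle — and the step I would spend the most care on — is controlling the \emph{global} behaviour of the unstable manifolds (the edges of the graph) as $r-1$ simple poles collapse to one pole of order $r$: a priori the merging of basins could create new saddle connections, or a critical point could run off to infinity / collide with a zero, destroying the combinatorial type. To handle this I would argue \emph{synthetically} rather than by following a homotopy: use $\tau_0$-$C^{1}$ continuity of $f\mapsto\overline{\overline{\mathcal{N}}}(f)$ (Subsubsection 1.1.2) together with the openness–density of $\underline{E}^{1}_{r}$ in $E^{1}_{r}$ to know that \emph{some} $g$ in a $\tau_0$-neighbourhood of the collapsed function is structurally stable with one pole of order $r$; then invoke Lemma \ref{L10.1}, which already constrains $\mathcal{G}_{r}(g)$ to be one of finitely many pseudo Newton graphs ($r=2$: the unique one; $r=3$: the three of Fig.\ref{Figure29}), and finally match the specific target by choosing the \emph{initial} Newton graph $\mathcal{G}_{r}$ and the \emph{order} in which poles are merged so as to realize each target at least once — a counting/pigeonhole argument then shows all targets are hit. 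The routine verifications of cellularity (via \cite{HT2}, proof of Lemma 2.9) and of the edge-count identity $\#\{\text{edges}\}=1+\#\{\text{vertices}\}$ I would only sketch, since they mirror the computations already carried out in Section 2 and in Lemma \ref{L10.1}.
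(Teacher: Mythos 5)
Your $r=2$ case is fine and essentially matches the paper, which simply combines Lemma \ref{L10.1} with the uniqueness of $\hat{\mathcal{G}}_{2}$. For $r=3$, however, your proposal has a genuine gap exactly at the point the lemma exists to address. The paper explicitly warns, right after Lemma \ref{L10.1}, that knowing $\mathcal{G}_{3}(g)$ must be \emph{one of} the three graphs of Fig.\ref{Figure29} does not show that \emph{each} of them occurs; that surjectivity is the entire content of Lemma \ref{L10.2}. Your final ``synthetic'' argument --- take some structurally stable $g$ near the collapsed function, invoke Lemma \ref{L10.1} to constrain $\mathcal{G}_{3}(g)$ to finitely many types, then conclude by ``counting/pigeonhole'' --- does not close this gap: pigeonhole would require knowing in advance that different initial choices (of Newton graph and merging order) yield different, and collectively all, target graphs, which is precisely what has to be proved. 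Nothing in your construction controls \emph{which} of the three types arises, nor why the basins would merge ``exactly along the chosen sequence of deleted edges'' during the degeneration; that assertion is stated but not argued, and a singular limit in which $r-1$ poles coalesce is exactly the kind of process where such combinatorial control is hardest to obtain.

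The paper's proof goes in the opposite direction and thereby gets the needed control for free. It starts from the \emph{nuclear} flow of order $3$ (one triple zero, one triple pole), whose phase portrait was completely determined in Section 3, and splits the triple zero in two steps: first into a simple zero $v_{1}$ plus a double zero $v_{1}'$, then $v_{1}'$ into $v_{2},v_{3}$. Each splitting is localized inside a small disk with a global boundary, so outside the disk the flow is $\varepsilon$-structurally stable and unchanged up to $C^{1}$-small perturbation; inside the disk an index argument produces exactly one new simple saddle, and the angle estimate $\beta>\tfrac{1}{2r}$ together with the symmetry axis $\ell$ lets one place $v_{1}$ (positions I, II) and then $(v_{2},v_{3})$ (positions I, IIa, IIb) so as to prescribe which separatrices attach to which zeros. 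The three resulting degree sequences of the vertices then discriminate the three graphs of Fig.\ref{Figure29}, proving each is realized. If you want to salvage your pole-coalescence route, you would need an analogous local normal form for the collision of poles with explicit control of the unstable manifolds through the degeneration --- which is substantially harder than the zero-splitting the paper performs, since the target function is reached only in the limit rather than by a small perturbation of a known flow.
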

\begin{proof}
$\underline{r=2}:$  Follows from 
Lemma \ref{L10.1}.\\
$\underline{r=3}:$ 
In Fig.\ref{IVFigure16} we consider the local phase portrait of $\overline{\mathcal{N}} (f)$ around the zero $v(=1+i)$ for $f$. Compare Fig.\ref{IVFigure11} and note that the zeros for $f$ are star nodes for $\overline{\mathcal{N}} (f)$. Since $\alpha +\gamma =\frac{1}{r} -\alpha$ and $\alpha +\beta +\gamma=\frac{1}{2}$ we have $\beta >   \frac{1}{2r}$ so that the angle $\beta+\beta$ spans an arc greater than $\frac{1}{r}(=\frac{1}{3})$.
Now the idea is:

To split off from the $3^{rd}$
order zero $v$ for $f$  a simple zero ($v_{1}$) ``Step 1'', and thereupon, to split up the remaining double zero ($v^{'}_{1}$) into two simple ones ($v_{2}$, $v_{3}$) ``Step 2'', in such a way that by an appropriate strategy, the resulting functions give rise to Newton flows with associated graphs, determining each of the three possible types in Fig.\ref{Figure29}.

\noindent
$\underline{{\rm Ad\; Step \:1}}:$ 
We perturb the original function $f$ into an elliptic function $g$ with one simple  ($v_{1}$) and one double ($v^{'}_{1}$ ) zero (close to each other), and one third order pole $w_{1}$
(thus close\footnote{\label{VNT11} Use property (\ref{vgl9}).} to the third order pole $w$ of $f$).
The original flow $\overline{\overline{\mathcal{N}}} (f)$ perturbs into a flow $\overline{\overline{\mathcal{N}}} (g)$ with $v_{1}$  and $v^{'}_{1}$ as attractors and $w_{1}$  as repellor. 
When $v_{1}$  tends to $v^{'}_{1}$, the perturbed function $g$ will tend to $f$, and thus the perturbed flow $\overline{\overline{\mathcal{N}}} (g)$ to $\overline{\overline{\mathcal{N}}} (f)$, cf. Subsubsection 1.1.2.
In particular, when the splitted zeros are sufficiently close to each other and the circle $C_{1}$ that encloses an open disk $D_{1}$ with center $v^{'}_{1}$, is chosen sufficiently small, $C_{1}$ is a global boundary (cf.\cite{JJT1}) for the perturbed flow $\overline{\overline{\mathcal{N}}}(g)$.
 It follows that, apart from the 
 equilibria $v_{1}$ and $v^{'}_{1}$  (both of Poincar\'e index 1) the flow $\overline{\overline{\mathcal{N}}} (g)$ exhibits on $D_{1}$ one other equilibrium 
 (with index $-1$): a simple saddle, say $c$ (cf. \cite{Guil}). 
 From this, it follows (cf. Subsubsection 1.1.1)
 that the phase portrait of $\overline{\overline{\mathcal{N}}} (g)$ around $v_{1}$  and $v^{'}_{1}$ is as sketched in Fig.\ref{Figure30}-(a), where the local basin of attraction for $v_{1}$ is shaded and intersects $C_{1}$ under an arc with length approximately $\frac{1}{3}$ . 
 
 \begin{figure}
\centering
\includegraphics[width=5.5in]{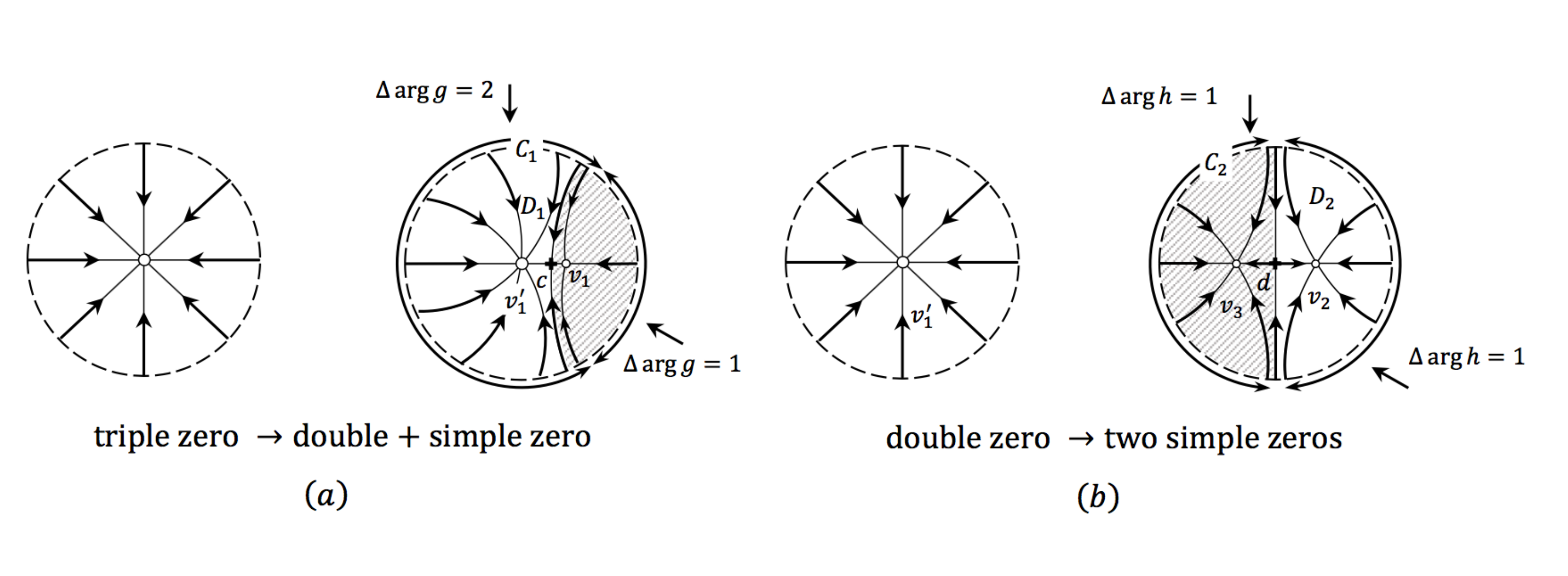}
\caption{Splitting zeros}
\label{Figure30}
\end{figure} 
 
 On the (compact!) complement  $T \backslash D_{1}$ this flow has one repellor ($w_{1}$)
 and two saddles.
 The repellor may be considered as hyperbolic (by the suitably chosen damping factor, compare the proof of Lemma \ref{L9.5}), whereas the saddles are distinct and thus simple (because $\overline{\overline{\mathcal{N}}} (f)$ has two simple saddles, say $\sigma_{1}$, $\sigma_{2}$, depending continuously on $v_{1}$ and $v^{'}_{1}$).
 Hence, the restriction of $\overline{\overline{\mathcal{N}}} (g)$ to $T \backslash D_{1}$ is $\varepsilon$-structurally stable (cf. \cite{Peix1}). So, we may conclude that, if $v_{1}$ (chosen sufficiently close to $v^{'}_{1}$) turns around $v^{'}_{1}$,  the phase portraits outside $D_{1}$ of the perturbed flows undergo a change that is negligible in the sense of the $C^{1}$-topology. Therefore, we denote the equilibria of $\overline{\overline{\mathcal{N}}} (g)$ on $T \backslash D_{1}$ by $w_{1}$, $\sigma_{1}$, $\sigma_{2}$ (i.e.,  without reference to $v_{1}$).
We move $v_{1}$ around a small circle, centered at  $v^{'}_{1}$ and focus on two positions (I, II) of $v_{1}$, specified by the position of $v_{1}$ w.r.t. the symmetry axis $\ell$.
 See Fig.\ref{IVFigure16} in comparison with Fig.\ref{IVFigure17}, where we sketched some trajectories of the phase portraits of $\overline{\mathcal{N}} (g)$ on $D_{1}$. \\
 
 \begin{figure}[h!]
\begin{center}
\includegraphics[scale=0.4]{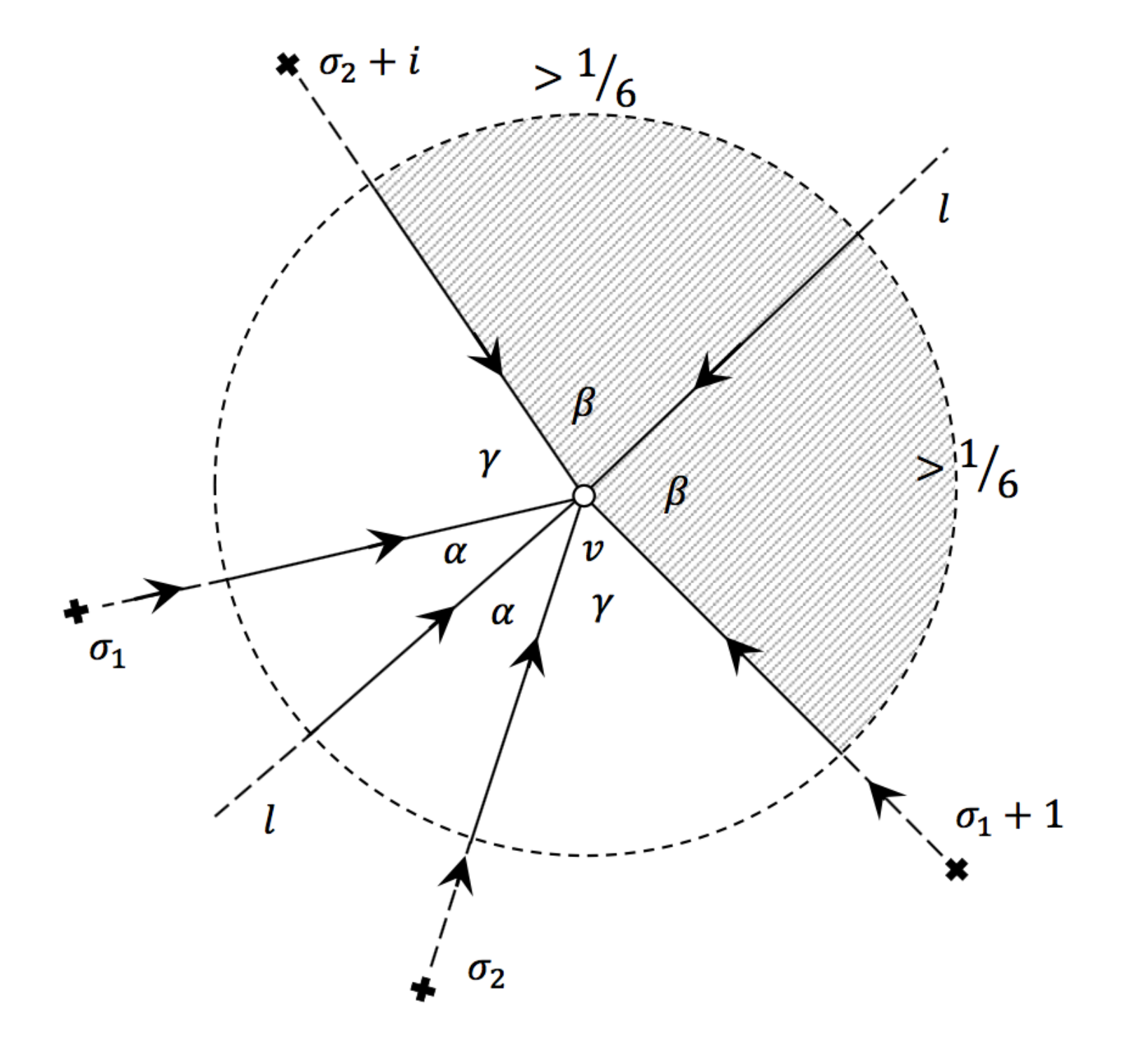}
\caption{\label{IVFigure16} Local phase portrait of $\overline{\overline{\mathcal{N}}} (f)$ around the zero $v(=1+i)$ for $f$; $r=2$.}
\end{center}
\end{figure}

\noindent
$\underline{{\rm Ad\; Step \:2}}:$
We proceed as in Step 1. Splitting $v^{'}_{1}$ into $v_{2}$  and $v_{3}$ (sufficiently close to each other) yields a perturbed elliptic function $h$, and thus a perturbed flow $\overline{\overline{\mathcal{N}}} (h)$. 
Consider a circle $C_{2}$, centered at the mid-point of $v_{2}$  and $v_{3}$, that encloses an open disk $D_{2}$ containing these points. 
If we choose $C_{2}$ sufficiently small, it is a global boundary of $\overline{\overline{\mathcal{N}}} (h)$. 
Reasoning as in Step 1, we find out that $\overline{\overline{\mathcal{N}}} (h)$ has on $D_{2}$  two simple attractors ($v_{2}$ , $v_{3}$ ) and one simple saddle: $d$ (close to the mid point of $v_{2}$  and $v_{3}$; compare Fig.\ref{Figure30}-(b)), where the local basin of attraction for $v_{3}$ is shaded and intersects $C_{2}$ under an arc with length approximately $\frac{1}{2}$. Moreover, as for $\overline{\overline{\mathcal{N}}} (g)$ in Step 1,  the flow $\overline{\overline{\mathcal{N}}} (h)$ is $\varepsilon$-structurally stable outside $D_{2}$. So, we may conclude that, if $v_{2}$  and $v_{3}$  turn (in diametrical position) around their mid-point, the phase portraits outside $D_{2}$  of the perturbed flows undergo a change that is negligible in the sense of $C^{1}$-topology. Therefore, we denote the equilibria of $\overline{\overline{\mathcal{N}}} (h)$ on $T \backslash D_{2}$ by $v_{1}$, $w_{1}$, $c$,  $\sigma_{1}$, and $\sigma_{2}$ (i.e., without reference to $v_{2}$  and $v_{3}$).

\begin{figure}[h!]
\begin{center}
\includegraphics[scale=0.4]{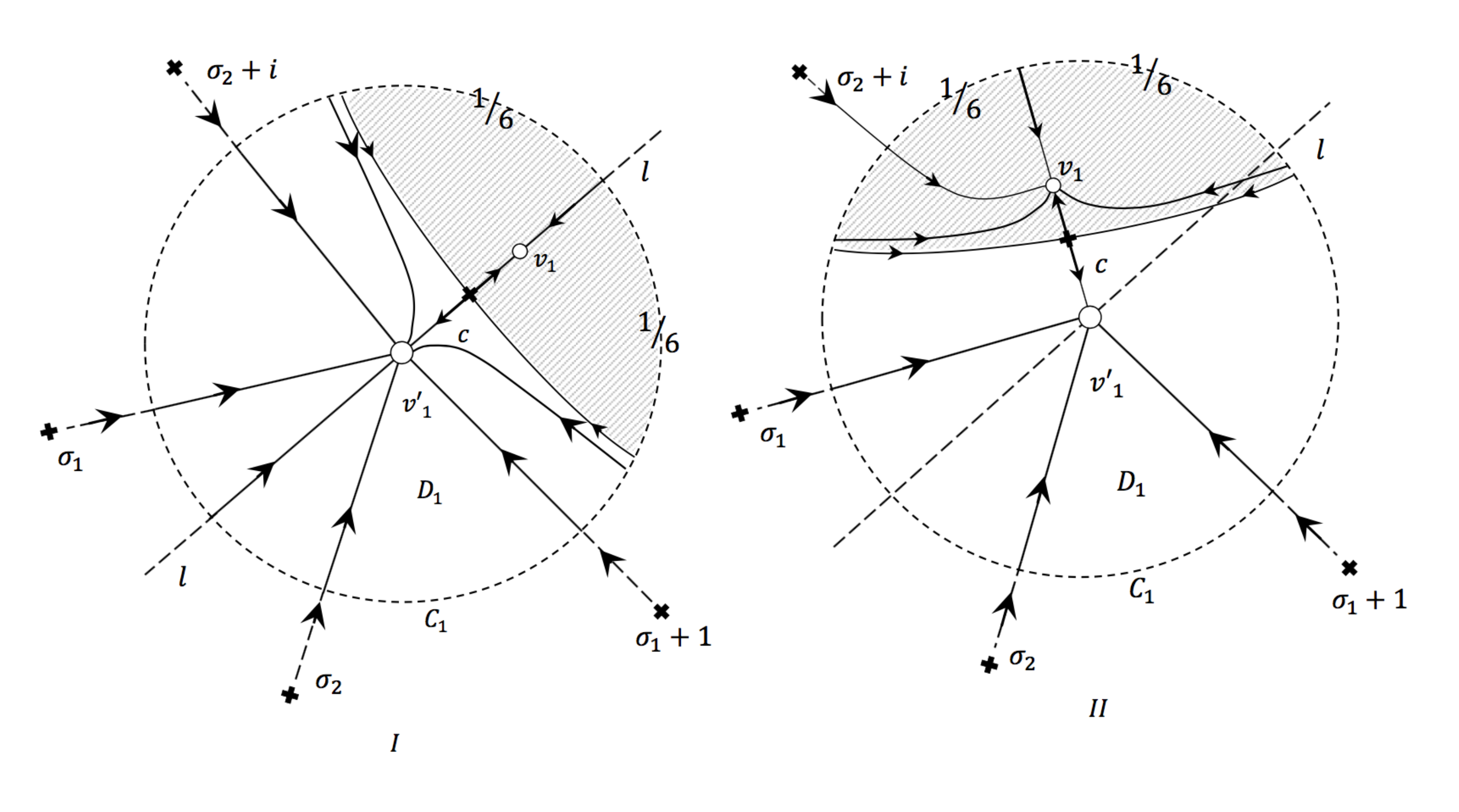}
\caption{\label{IVFigure17} Phase portraits for $\overline{\overline{\mathcal{N}}} (g)$) on  $D_{1}$.}
\end{center}
\end{figure}

\begin{figure}[h!]
\begin{center}
\includegraphics[scale=0.3]{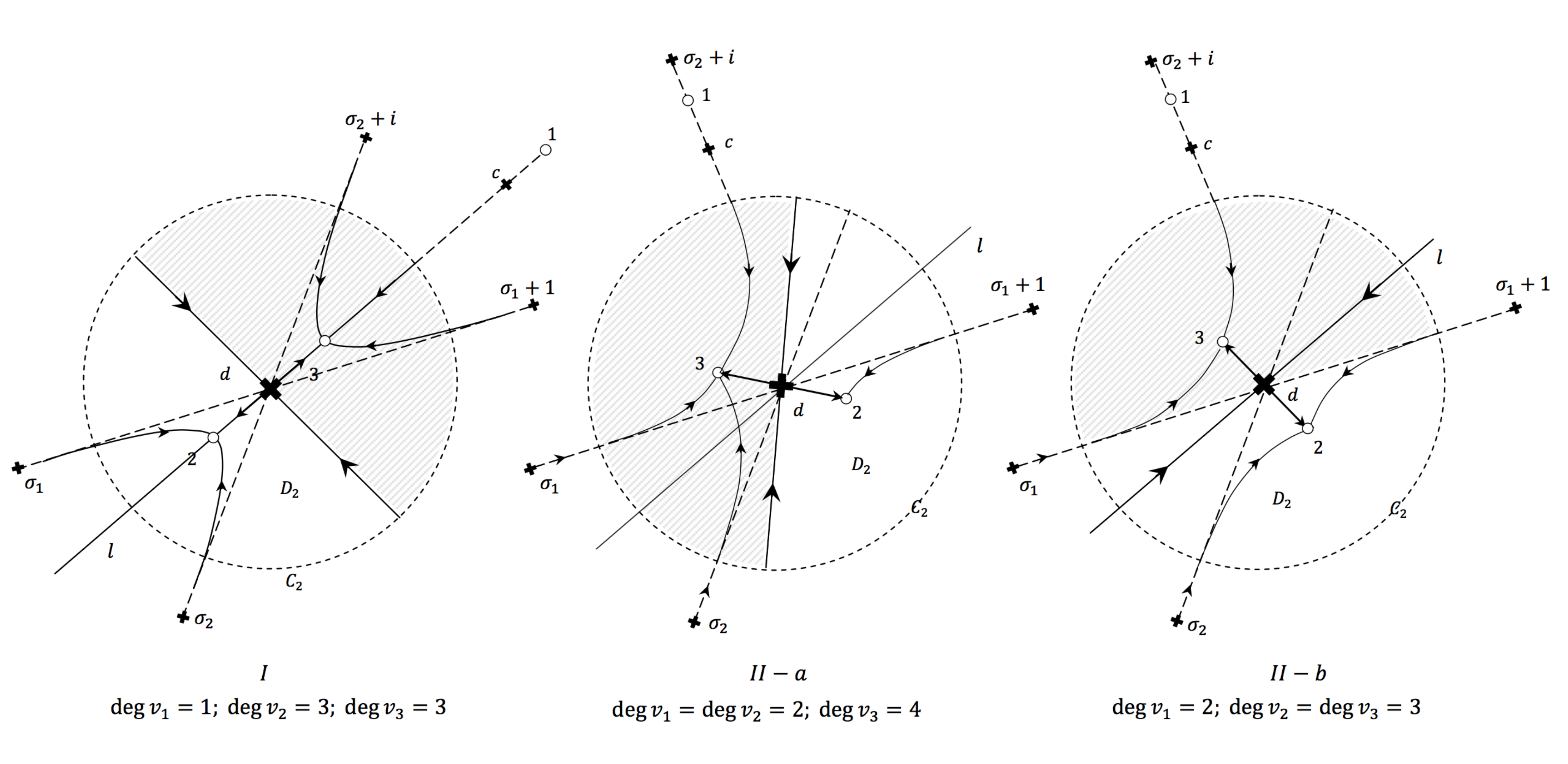}
\caption{\label{IVFigure18} The three phase portraits for $\overline{\overline{\mathcal{N}}} (h)$) on  $D_{2}$.}
\end{center}
\end{figure}

Finally, for $v_{1}$ in the position of Fig.\ref{IVFigure17}-(I)  we choose the pair ($v_{2}$,$v_{3}$) as in Fig.\ref{IVFigure18}-I; and for $v_{1}$ in the position of Fig.\ref{IVFigure17}-(II), we distinguish between two possibilities: Fig.\ref{IVFigure18}-IIa or Fig.\ref{IVFigure18}-IIb. 
Note that, with these choices of $v_{1}$, $v_{2}$, $v_{3}$ each of the obtained functions has three simple zeros and one triple pole.
Moreover, the four saddles are simple and not connected, whereas the three zeros are simple as well.
So the graph of the associated Newton flow is well defined and has only one face, four edges and three vertices.  Recall that the various values of 
degree $v_{i}$
discriminate between the three possibilities for the graphs of type $\check{\mathcal{G}}_{3}$). Now inspection of Fig.\ref{IVFigure18} yields the assertion.
\end{proof}

Up till now, we paid attention to pseudo Newton graphs with only one face (i.e., of type $\check{\mathcal{G}}$ or $\hat{\mathcal{G}}$.). 
If $r=2$, these are the only possibilities. \\
If $r=3$, there are also pseudo Newton graphs (denoted by $\underline{\mathcal{G}}$) with two faces
and angles summing up to 1 or 2. 
When the boundaries of any pair of the original $\mathcal{G}_{3}$-faces have a subwalk in common, these walks have length 1 or 2. (Use the A-property and compare Fig.\ref{Figure9.11}). 
So, when two $\mathcal{G}_{3}$-faces are merged, the resulting $\underline{\mathcal{G}}$-face admits {\it either} only vertices of degree $\geqslant 2$ {\it or} one vertex of degree 1\footnote{\label{FTN11}The E-property holds not always for $\underline{\mathcal{G}}$: Only if there is a $\underline{\mathcal{G}}$-vertex of degree 1, the dual $\underline{\mathcal{G}}^{*}$ admits a {\it contractible} loop (corresponding with the edge adjacent to this vertex); all other $\underline{\mathcal{G}}^{*}$-loops- if there are any- are {\it non-contractible}.}. From now on, we focus on the Newton graphs as exposured in Fig.\ref{Figure9.11} (i),(iv) [since all other Newton graphs (in this figure) can be dealt with in the same way, there is no loss of generality]. Then the two $\underline{\mathcal{G}}$-faces under consideration are $\underline{F}_{7}$ ($:=F_{4,6}$) and $\underline{F}_{8}:=F_{5}$; see Fig.\ref{Figure54} 
(in comparison with Fig.\ref{Figure9.11} (i),(iv)).

\begin{figure}[h!]
\centering
\includegraphics[scale=0.5]{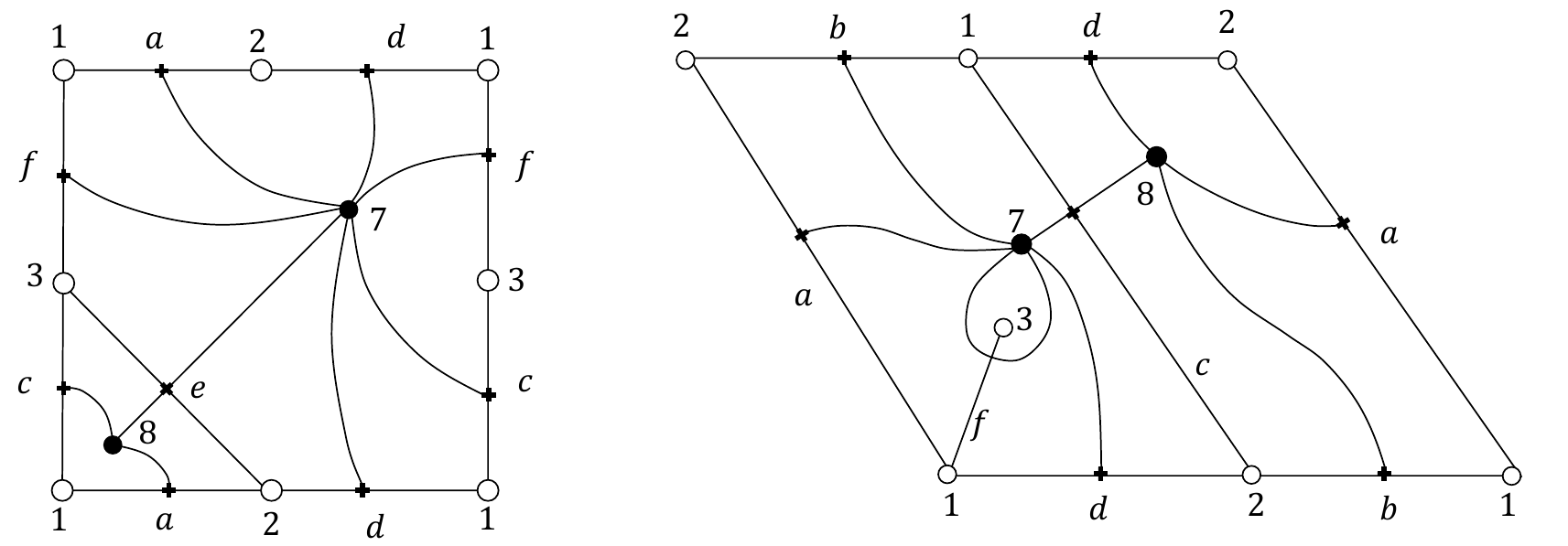}
\caption{The graph $\underline{\mathcal{G}} \wedge \underline{\mathcal{G}}^{*}$.}
\label{Figure54}
\end{figure}

We consider the common refinement $\underline{\mathcal{G}} \wedge \underline{\mathcal{G}}^{*}$ of $\underline{\mathcal{G}}$ and its dual $\underline{\mathcal{G}}^{*}$. Following Peixoto \cite{Peix1}, \cite{Peix2} we claim that $\underline{\mathcal{G}} \wedge \underline{\mathcal{G}}^{*}$ determines a $C^{1}$-structurally stable toroidal flow $X(\underline{\mathcal{G}})$ with canonical regions as depicted\footnote{\label{FTN13}In the terminology used in \cite{Peix1}, the canonical region in the r.h.s. of Fig.\ref{Figure55} is of Type 3, whereas the other two regions are of Type 1.} in Fig.\ref{Figure55}. As equilibria for $X(\underline{\mathcal{G}})$ we have: three stable and two unstable proper nodes (corresponding to the $\underline{\mathcal{G}}$-resp. 
$\underline{\mathcal{G}}^{*}$-vertices) and five orthogonal saddles (corresponding to the pairs $(e, e^{*})$ of $\underline{\mathcal{G}}$- and $\underline{\mathcal{G}}^{*}$-edges).

\begin{figure}[h!]
\centering
\includegraphics[scale=0.3]{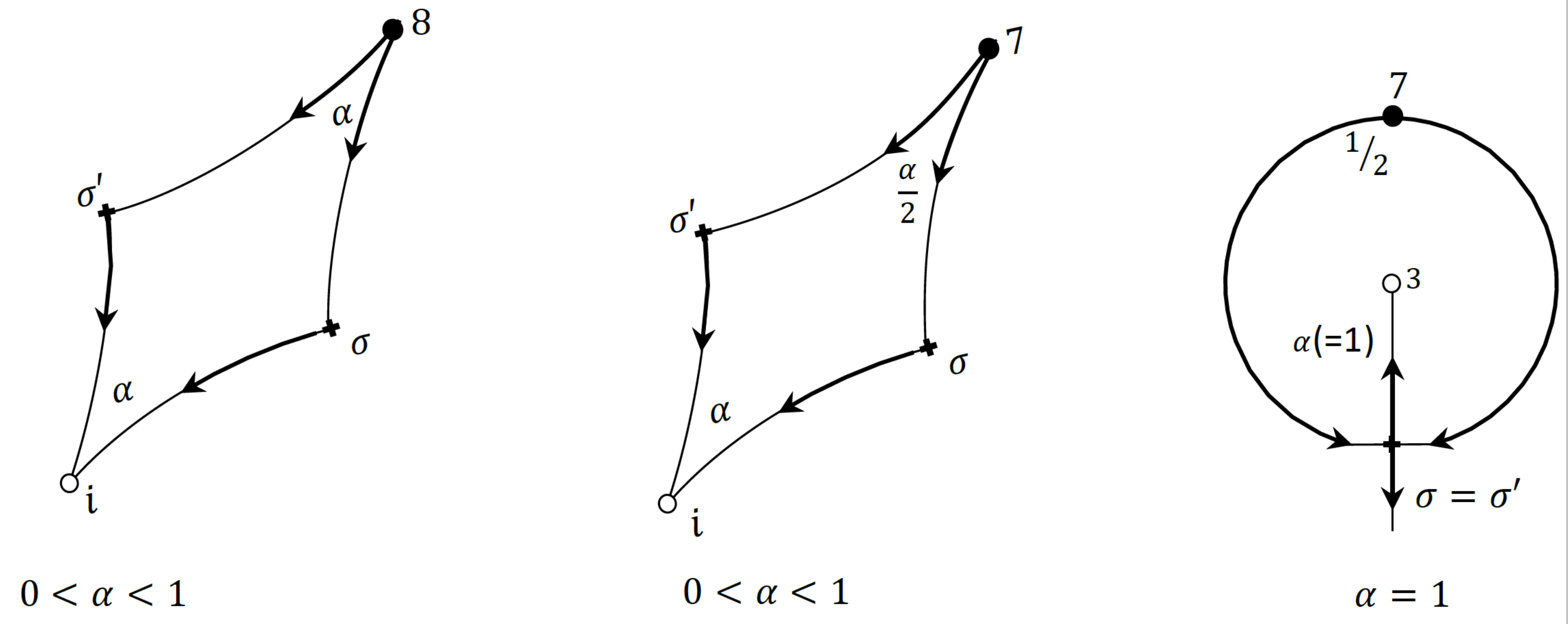}
\caption{The various appearances of the canonical regions of $\mathcal{X}(\underline{\mathcal{G}})$.}
\label{Figure55}
\end{figure} 

 Argueing basically as in the proof of Theorem 4.1 from our paper \cite{HT2}
, it can be shown that  $X(\underline{\mathcal{G}})$) is equivalent with an elliptic Newton flow generated by a function on three simple zeros, one double and one simple pole and five simple critical points; compare\footnote{\label{FTN14} The picture in the r.h.s. of Fig.\ref{Figure55} corresponds to a  canonical region in the $\underline{\mathcal{G}}$-face
 with angles summing up to 1, determining the simple pole for $X(\underline{\mathcal{G}})$, compare also Fig.\ref{Figure2N}. The other two pictures correspond to the $\underline{\mathcal{G}}$-face with angles summing up to 2, determining the double pole for $X(\underline{\mathcal{G}})$.} Fig.\ref{IVFigure10}. As an elliptc Newton flow, $X(\underline{\mathcal{G}})$ is not ($\tau_{0}$-)structurally stable, since $\underline{\mathcal{G}}$ is not Newtonian, compare Subsubsection 1.2.2. However, by the aid of a suitably chosen damping factor, compare Lemma \ref{L9.5} (preambule), and within the class of all elliptic Newton flows generated by functions on three simple zeros, on one double and one simple pole and on five simple critical points, the flow $X(\underline{\mathcal{G}})$ is structurally stable w.r.t. the relative topology $\tau_{0}$.\\
 
 Altogether, we find:
 
 \begin{theorem}
\label{T4.3}
Any pseudo Newton graph of order $r, r=2, 3$ represents an elliptic Newton flow. In particular,  the $3^{\text{rd}}$
order nuclear Newton flow ``creates'' - by splitting up zeros/ poles $($``bifurcation''$ )$-  all, 
up to duality and topological equivalency, structurally stable elliptic Newton flows of order 3. 
\end{theorem}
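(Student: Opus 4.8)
The plan is to assemble Theorem \ref{T4.3} from the pieces already in place: Lemmas \ref{L10.1} and \ref{L10.2} handle the one-faced pseudo Newton graphs $\check{\mathcal{G}}_r$ and $\hat{\mathcal{G}}_r$ for $r=2,3$, while the discussion preceding the statement handles the two-faced graphs $\underline{\mathcal{G}}$ that occur only when $r=3$. First I would dispose of the case $r=2$: by Corollary \ref{NC8.2} the only pseudo Newton graphs of order $2$ are $\hat{\mathcal{G}}_2=\check{\mathcal{G}}_2$ (unique up to equivalency) and the nuclear graph $\hat{\mathcal{G}}_1$; the former is realised as $\mathcal{G}_2(g)$ for some $g\in\underline{E}^1_2$ by Lemma \ref{L10.2}, and $\hat{\mathcal{G}}_1$ is realised by the nuclear Newton flow of order $2$ via Lemma \ref{L9.1} and Definition \ref{D3.3}. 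That settles $r=2$ entirely.

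Next, for $r=3$ I would enumerate the pseudo Newton graphs by the number of faces. The one-faced ones are the graphs $\check{\mathcal{G}}_3$ and $\hat{\mathcal{G}}_3$, all of which (by Corollary \ref{NC8.2} and Lemma \ref{L10.2}) arise as $\mathcal{G}_3(g)$ with $g\in\underline{E}^1_3$, hence represent genuine elliptic Newton flows; the nuclear $\hat{\mathcal{G}}_1$ is again represented by the $3^{\text{rd}}$ order nuclear Newton flow. The remaining pseudo Newton graphs of order $3$ are the two-faced graphs $\underline{\mathcal{G}}$ obtained by merging a single pair of faces of a Newton graph $\mathcal{G}_3$; invoking the $A$-property and the list in Fig.\ref{Figure9.11}, each such $\underline{\mathcal{G}}$ has angles summing to $1$ on one face and $2$ on the other. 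For these I would use the distinguished-graph construction $\underline{\mathcal{G}}\wedge\underline{\mathcal{G}}^{*}$, which by Peixoto's theory determines a $C^1$-structurally stable toroidal flow $X(\underline{\mathcal{G}})$; then, repeating the argument of Theorem 4.1 of \cite{HT2} together with a suitable damping factor (as in the preamble to Lemma \ref{L9.5}), I would identify $X(\underline{\mathcal{G}})$ with an elliptic Newton flow generated by a function with three simple zeros, one double and one simple pole. It is worth remarking that the reduction "all other Newton graphs in Fig.\ref{Figure9.11} can be dealt with in the same way" should be checked: one should observe that merging any admissible pair of $\mathcal{G}_3$-faces produces, up to equivalency, one of the $\underline{\mathcal{G}}$ already analysed, so no new case arises.

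For the second assertion—that the $3^{\text{rd}}$ order nuclear flow ``creates'' all structurally stable order-$3$ elliptic Newton flows by splitting zeros/poles—I would argue as follows. Starting from the nuclear flow $\overline{\overline{\mathcal{N}}}(f)$ with one triple zero $v$ and one triple pole $w$, Step 1 of the proof of Lemma \ref{L10.2} splits $v$ into a simple zero $v_1$ and a double zero $v_1'$, and Step 2 splits $v_1'$ into simple zeros $v_2,v_3$; the three strategies for positioning $(v_1;v_2,v_3)$ recorded in Fig.\ref{IVFigure17}--\ref{IVFigure18} yield functions in $\underline{E}^1_3$ whose associated graphs realise each of the three types of $\check{\mathcal{G}}_3$ (the degree sequence of the $v_i$ being the discriminating invariant). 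Dualising (via $\overline{\overline{\mathcal{N}}}(1/f)=-\overline{\overline{\mathcal{N}}}(f)$ and $\mathcal{G}^{*}(f)=-\mathcal{G}(1/f)$) and then splitting the triple pole instead, and finally splitting once more to reach graphs with one double pole remaining, one reaches, up to duality, all Newton graphs of order $3$ in Fig.\ref{Figure9.11}; by the classification/representation results of \cite{HT2} these account for all structurally stable elliptic Newton flows of order $3$ up to conjugacy. The main obstacle is the bookkeeping in this last step: one must verify that the finitely many splitting strategies, combined with duality, genuinely exhaust the list in Fig.\ref{Figure9.11} and that at each intermediate stage the flow can be kept structurally stable (simple unconnected saddles, generic pole) by the perturbation-and-damping devices of \cite{HT1} and Lemma \ref{L9.5}; once the case analysis against Fig.\ref{Figure9.11} is complete, the theorem follows.
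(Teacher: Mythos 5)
Your proposal is correct and follows essentially the same route as the paper: the theorem there is stated as a summary (``Altogether, we find'') of Lemmas \ref{L10.1} and \ref{L10.2} for the one-faced pseudo Newton graphs, the nuclear case via Lemma \ref{L9.1}, and the Peixoto-type realisation of $X(\underline{\mathcal{G}})$ for the two-faced graphs, exactly the pieces you assemble. Your explicit remarks about checking that merging any admissible pair of $\mathcal{G}_3$-faces yields one of the analysed $\underline{\mathcal{G}}$, and that the splitting strategies combined with duality exhaust Fig.~\ref{Figure9.11}, are points the paper leaves implicit rather than gaps in your argument.
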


\end{document}